\newcommand{\id}{\mathrm{id}}
\newcommand{\Rank}{\mathrm{Rank}}
\newcommand{\dd}{\mathrm{d}}
\newcommand{\Aut}{\mathrm{Aut}}
\newcommand{\End}{\mathrm{End}}
\newcommand{\SRank}{\mathrm{SRank}}
\theoremstyle{plain}
\newtheorem{corollary}{Corollary}
\newtheorem{lemma}{Lemma}
\newtheorem{proposition}{Proposition}
\newtheorem{theorem}{Theorem}
\theoremstyle{definition}
\newtheorem{definition}{Definition}
\newtheorem{example}{Example}
\newtheorem{remark}{Remark}
\begin{document}

\title{On the minimal number of generators of endomorphism monoids of full shifts}
\author{Alonso Castillo-Ramirez\footnote{Email: alonso.castillor@academicos.udg.mx}  \\
\small{Department of Mathematics, University Centre of Exact Sciences and Engineering,\\ University of Guadalajara, Guadalajara, Mexico.} }

\maketitle

\begin{abstract}
For a group $G$ and a finite set $A$, denote by $\End(A^G)$ the monoid of all continuous shift commuting self-maps of $A^G$ and by $\Aut(A^G)$ its group of units. We study the minimal cardinality of a generating set, known as the \emph{rank}, of $\End(A^G)$ and $\Aut(A^G)$. In the first part, when $G$ is a finite group, we give upper and lower bounds for the rank of $\Aut(A^G)$ in terms of the number of conjugacy classes of subgroups of $G$. In the second part, we apply our bounds to show that if $G$ has an infinite descending chain of normal subgroups of finite index, then $\End(A^G)$ is not finitely generated; such is the case for wide classes of infinite groups, such as infinite residually finite or infinite locally graded groups.   \\

\textbf{Keywords:} Full shift, endomorphisms, automorphisms, cellular automata, minimal number of generators. \\

\textbf{MSC 2010:} 37B10, 68Q80, 05E18, 20M20.
\end{abstract}

\section{Introduction}\label{intro}

Let $G$ be a group and $A$ a finite set. The \emph{full shift} $A^G$ is the set of all maps $x : G \to A$, equipped with the \emph{shift action} of $G$ on $A^G$:
\[ (g \cdot x) (h) = x(g^{-1}h), \ \ \forall g,h \in G, x \in A^G. \]
We endow $A^G$ with the \emph{prodiscrete topology}, which is the product topology of the discrete topology on $A$. An \emph{endomorphism of $A^G$} is a continuous shift commuting self-map of $A^G$. These are fundamental objects in symbolic dynamics, and by Curtis-Hedlund Theorem (see \cite[Theorem 1.8.1]{CSC10}) a map $\tau : A^G \to A^G$ is an endomorphism if and only if it is a \emph{cellular automaton} of $A^G$ (i.e. there is a finite subset $S \subseteq G$, called a \emph{memory set}, and a function $\mu : A^S \to A$ satisfying $\tau(x)(g) = \mu (( g^{-1} \cdot x) \vert_{S})$, $\forall x \in A^G, g \in G$).

Equipped with composition of functions, the set $\End(A^G)$ of endomorphisms of $A^G$ is a monoid. The \emph{group of units} (i.e. group of invertible elements) of $\End(A^G)$ is denoted by $\Aut(A^G)$. When $\vert A \vert \geq 2$ and $G = \mathbb{Z}$, several interesting properties are known for $\Aut(A^\mathbb{Z})$: it is a countable group that is not finitely generated and it contains an isomorphic copy of every finite group, as well as the free group on a countable number of generators (see \cite{BLR88} and \cite[Sec. 13.2]{LM95}). However, despite of several efforts, most of the algebraic properties of $\End(A^G)$ and $\Aut(A^G)$ still remain unknown. 

Given a subset $T$ of a monoid $M$, the \emph{submonoid generated} by $T$, denoted by $\langle T \rangle$, is the smallest submonoid of $M$ that contains $T$; this is equivalent as defining $\langle T \rangle := \{ t_1 t_2 \dots t_k \in M : t_i \in T, \ \forall i, \ k \geq 0 \}$. We say that $T$ is a \emph{generating set of $M$} if $M= \langle T \rangle$. The monoid $M$ is said to be \emph{finitely generated} if it has a finite generating set. The \emph{rank} of $M$ is the minimal cardinality of a generating set:
\[ \Rank(M) := \min\{\vert T \vert : M= \langle T \rangle \}. \]	

The question of finding the rank of a monoid is important in semigroup theory; it has been answered for several kinds of transformation monoids and Rees matrix semigroups (e.g., see \cite{AS09,G14}). For the case of monoids of endomorphisms of full shifts over finite groups, the question has been addressed in \cite{CRG16b,CRG17, CRSA19}; in particular, the rank of $\Aut(A^G)$ when $G$ is a finite cyclic group has been examined in detail in \cite{CRG16a}. 

In this paper, we study the rank of $\Aut(A^G)$ and $\End(A^G)$. In Section \ref{basic}, we introduce notation and review some basic facts on group theory and the Rank function. In Section \ref{finite}, when $G$ is a finite group, we use the structure theorem for $\Aut(A^G)$ obtained in \cite{CRG17} to provide upper and lower bounds for the rank of $\Aut(A^G)$ in terms of the number of conjugacy classes of subgroups of $G$. We specialize in some particular cases such as cyclic, dihedral, Dedekind, and permutation groups. Finally, in Section \ref{infinite}, we apply our bounds to provide an elementary proof of the following theorem. 

\begin{theorem}\label{main}
Let $A$ be a finite set, and let $G$ be a group has an infinite descending chain of normal subgroups of finite index in $G$. Then, $\End(A^G)$ is not finitely generated.
\end{theorem}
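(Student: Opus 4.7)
The natural strategy is to exploit the chain of normal subgroups to produce a sequence of finite quotient groups $G/N_i$ whose shift automorphism groups have increasing rank, and to transfer this growth back to $\End(A^G)$ via restriction to periodic configurations.

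First, for each $N \trianglelefteq G$ of finite index, I would look at the closed shift-invariant set $\Fix(N) := \{x \in A^G : n \cdot x = x \ \forall n \in N\}$ of $N$-periodic configurations. The shift action factors through $G/N$ on $\Fix(N)$, giving a canonical $G$-equivariant homeomorphism $\Fix(N) \cong A^{G/N}$. Since every endomorphism commutes with the shift, it preserves $\Fix(N)$, so restriction yields a monoid homomorphism $\rho_N : \End(A^G) \to \End(A^{G/N})$. The plan is to verify that $\rho_N$ is \emph{surjective} by an explicit lift: given a cellular automaton $\sigma$ on $A^{G/N}$ with memory set $T \subseteq G/N$ and local rule $\mu$, choose a transversal $\tilde T \subseteq G$ of $T$ under the projection $\pi : G \to G/N$ and define a local rule on $A^{\tilde T}$ by pulling back $\mu$ along the bijection $\pi|_{\tilde T}$; the resulting cellular automaton on $A^G$ restricts to $\sigma$. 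Surjectivity immediately gives $\Rank(\End(A^{G/N})) \leq \Rank(\End(A^G))$.

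Next, I would pass from the monoid to its group of units. Since $A$ and $G/N$ are finite, $\End(A^{G/N})$ is a finite monoid, and for any finite monoid $M$ with group of units $U$ one has $\Rank(U) \leq \Rank(M)$: if $x_1 \cdots x_k$ equals a unit then each $x_i$ must already be a unit (in a finite monoid a one-sided inverse is automatically two-sided), so the intersection of any generating set of $M$ with $U$ generates $U$. Combining the two steps yields the uniform inequality
\[ \Rank(\Aut(A^{G/N})) \leq \Rank(\End(A^G)). \]

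Finally, I would assume for contradiction that $\End(A^G)$ is finitely generated, with rank $r$, and apply this uniformly along the chain $G = N_0 \supsetneq N_1 \supsetneq \cdots$. For each $i$, the subgroups $N_j/N_i$ with $0 \leq j \leq i$ form an $(i+1)$-element chain of distinct normal subgroups of $G/N_i$, and distinct normal subgroups of any group belong to distinct conjugacy classes of subgroups, so $G/N_i$ has at least $i+1$ conjugacy classes of subgroups. The lower bound on $\Rank(\Aut(A^H))$ from Section \ref{finite}, applied to $H = G/N_i$, should then force $\Rank(\Aut(A^{G/N_i})) \to \infty$ as $i \to \infty$, contradicting the uniform bound $r$. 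The main obstacle I anticipate is ensuring that the lower bound from Section \ref{finite} is quantitatively strong enough to diverge with the count of conjugacy classes of subgroups; if that bound ends up phrased in terms of finer invariants, one would need to check that those invariants also grow along the chain, using the flexibility provided by the enlarging normal-subgroup lattice of $G/N_i$.
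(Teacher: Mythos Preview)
Your proposal is correct and follows the paper's argument essentially step for step: the epimorphism $\End(A^G)\to\End(A^{G/N})$ (the paper's Lemma~\ref{th:quotient}, which is exactly your restriction-to-$\Fix(N)$ map), the passage to the group of units of the finite monoid $\End(A^{G/N_i})$ (Lemma~\ref{le:group of units}), the count of at least $i+1$ conjugacy classes in $G/N_i$ coming from the normal chain, and the lower bound of Proposition~\ref{lower-bound}. The obstacle you anticipate is precisely the $r_2$-correction in Proposition~\ref{lower-bound} when $q=2$; the paper handles it by observing that among the chain subgroups $G,N_1,\dots,N_i$ at most one has index $2$ in $G$, so the bound still tends to infinity.
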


This theorem implies that $\End(A^G)$ is not finitely generated for wide classes of infinite groups, such as infinite residually finite or infinite locally graded groups. However, it does not cover the cases of infinite groups with few normal subgroups of finite index, such as infinite symmetric groups or infinite simple groups. 

This paper is an extended version of \cite{CRSA19}. All sections have been restructured, the exposition has been improved, and the results of Section \ref{infinite} have been greatly extended. Moreover, the technical proof of Theorem \ref{dihedral}, about dihedral groups, has been omitted.


\section{Basic Results} \label{basic}

We assume the reader has certain familiarity with basic concepts of group theory. 

Let $G$ be a group and $A$ a finite set. The \emph{stabiliser} and \emph{$G$-orbit} of a configuration $x \in A^G$ are defined, respectively, by
\[ G_x := \{ g \in G : g \cdot x = x \} \text{ and } Gx := \{ g \cdot x : g \in G \}. \]
Stabilisers are subgroups of $G$, while the set of $G$-orbits forms a partition of $A^G$. 

Two subgroups $H_1$ and $H_2$ of $G$ are \emph{conjugate} in $G$ if there exists $g \in G$ such that $g^{-1} H_1 g = H_2$. This defines an equivalence relation on the subgroups of $G$. Denote by $[H]$ the conjugacy class of $H \leq G$. A subgroup $H \leq G$ is \emph{normal} if $[H] = \{ H \}$ (i.e. $g^{-1}H g = H$ for all $g \in G$). Let $N_G(H) := \{ g \in G : H = g^{-1} H g \} \leq G$ be the \emph{normaliser of $H$ in $G$}. Note that $H$ is always a normal subgroup of $N_G(H)$. Denote by $r(G)$ the total number of conjugacy classes of subgroups of $G$, and by $r_i(G)$ the number of conjugacy classes $[H]$ such that $H$ has index $i$ in $G$: 
\begin{align*}
r(G) & := \vert \{ [H] : H \leq G \} \vert, \\
r_i(G) & := \vert \{ [ H] : H \leq G, \ [G:H] = i \} \vert.
\end{align*}

For any $H \leq G$, denote
\[ \alpha_{[H]}(G ; A):= \vert \{ Gx \subseteq A^G :  [G_x] = [H]  \} \vert. \]
This number may be calculated using the Mobius function of the subgroup lattice of $G$, as shown in \cite[Sec. 4]{CRG17}.

For any integer $\alpha \geq 1$, let $S_{\alpha}$ be the symmetric group of degree $\alpha$. The \emph{wreath product} of a group $C$ by $S_\alpha$ is the set
\[ C \wr S_{\alpha} := \{ (v; \phi) : v \in C ^\alpha, \phi \in S_\alpha \} \]
equipped with the operation $(v;\phi) \cdot (w; \psi) = ( v  \cdot w^{\phi}; \phi \psi)$, for any $v,w \in C^\alpha, \phi, \psi \in S_\alpha$, where $\phi$ acts on $w$ by permuting its coordinates:
\[  w^\phi = (w_1, w_2, \dots, w_\alpha)^\phi := (w_{\phi(1)}, w_{\phi(2)}, \dots, w_{\phi(\alpha)}). \]
In fact, as may be seen from the above definitions, $C \wr S_{\alpha}$ is equal to the external semidirect product $C^{\alpha} \rtimes_{\varphi} S_{\alpha}$, where $\varphi : S_\alpha \to \Aut(C^{\alpha})$ is the action of $S_{\alpha}$ of permuting the coordinates of $C^{\alpha}$. For a more detailed description of the wreath product see \cite{AS09}.

The $\Rank$ function on monoids does not behave well when taking submonoids or subgroups: in other words, if $N$ is a submonoid of $M$, there may be no relation between $\Rank(N)$ and $\Rank(M)$. For example, if $M=S_n$ is the symmetric group of degree $n \geq 3$ and $N$ is a subgroup of $S_n$ generated by $\lfloor \frac{n}{2} \rfloor$ commuting transpositions, then $\Rank(S_n) = 2$, as $S_n$ may be generated by a transposition and an $n$-cycle, but $\Rank(N) = \lfloor \frac{n}{2} \rfloor$. It is even possible that $M$ is finitely generated but $N$ is not finitely generated (such as the case of the free group on two symbols and its commutator subgroup). However, there are some tools that we may use to bound the rank.

For any subset $U$ of a monoid $M$, the \emph{relative rank} of $U$ in $M$ is
\[ \Rank(M:U) = \min \{ \vert W \vert : M = \langle U \cup W \rangle  \}. \]
When $M$ is a finite monoid and $U$ is the group of units of $M$, we have the basic identity
\begin{equation}\label{rank-formula}
 \Rank(M) = \Rank(M:U) + \Rank(U),
\end{equation}
which follows as any generating set for $M$ must contain a generating set for $U$ (see \cite[Lemma 3.1]{AS09}). The relative rank of $\Aut(A^G)$ in $\End(A^G)$ has been established in \cite[Theorem 7]{CRG17} for finite \emph{Dedekind groups} (i.e. groups in which all subgroups are normal).   

\begin{lemma}\label{le:group of units}
For any finite group $G$ and finite set $A$,
\[ \Rank(\Aut(A^G)) \leq \Rank(\End(A^G)). \]
\end{lemma}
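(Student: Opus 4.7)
The plan is essentially a one-line application of the relative rank identity displayed as equation \eqref{rank-formula} in the excerpt. First I would observe that when both $G$ and $A$ are finite, the full shift $A^G$ is a finite set, and hence $\End(A^G)$, being a subset of the finite transformation monoid $(A^G)^{A^G}$, is itself a finite monoid. This finiteness is exactly the hypothesis under which \eqref{rank-formula} is stated.

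Next I would specialize \eqref{rank-formula} to $M = \End(A^G)$ with its group of units $U = \Aut(A^G)$. This yields
\[ \Rank(\End(A^G)) = \Rank(\End(A^G) : \Aut(A^G)) + \Rank(\Aut(A^G)). \]
Since the relative rank on the right is a cardinality and therefore non-negative, the inequality $\Rank(\Aut(A^G)) \leq \Rank(\End(A^G))$ follows immediately.

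There is no real obstacle here; the content of the lemma is entirely absorbed into the already-quoted fact (from \cite[Lemma 3.1]{AS09}) that any generating set of a finite monoid must contain a generating set of its group of units. The only thing worth stating explicitly in the write-up is the finiteness of $\End(A^G)$, which is what lets us invoke that fact. One could alternatively give a direct argument: given any generating set $T$ of $\End(A^G)$, the set $T \cap \Aut(A^G)$ generates $\Aut(A^G)$, because in a finite monoid every unit factors as a product of units (any factorisation $u = t_1 \cdots t_k$ of a unit forces each $t_i$ to lie in a subgroup of $M$, and in particular in the group of units, via the standard finite-monoid argument). Either presentation closes the proof in a few lines.
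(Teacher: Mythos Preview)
Your proposal is correct and follows exactly the paper's own proof: note that $\End(A^G)$ is finite because $G$ and $A$ are finite, then apply identity~\eqref{rank-formula} and use non-negativity of the relative rank. The paper's argument is the same one-liner, so there is nothing to add.
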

\begin{proof}
As $G$ and $A$ are both finite, $\End(A^G)$ is a finite monoid. The result follows by (\ref{rank-formula}). 
\end{proof}

\begin{lemma}\label{le:basic}
Let $G$ and $H$ be a groups, and let $N$ be a normal subgroup of $G$. Then:
\begin{enumerate}
\item $\Rank(G/N) \leq \Rank(G)$.
\item $\Rank(G \times H) \leq \Rank(G) + \Rank(H)$.
\item $\Rank( G \wr S_\alpha) \leq \Rank(G) + \Rank(S_\alpha)$, for any $\alpha \geq 1$.
\item $\Rank(\mathbb{Z}_d \wr S_\alpha) = 2$, for any $d, \alpha \geq 2$. . 
\end{enumerate}
\end{lemma}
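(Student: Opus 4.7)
The plan is to prove each part in turn, with parts 1--3 being straightforward constructions and part 4 requiring the bulk of the work.

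For part 1, I would take a minimal generating set $T$ of $G$, and observe that under the canonical projection $\pi : G \to G/N$ the image $\pi(T)$ is a generating set of $G/N$ of size at most $|T|$. For part 2, given minimal generating sets $T_G$ for $G$ and $T_H$ for $H$, the union $(T_G \times \{e_H\}) \cup (\{e_G\} \times T_H)$ generates $G \times H$ because the two embedded factors commute and together surject on each coordinate; counting gives the bound.

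For part 3, fix generating sets $T_G \subseteq G$ and $T_\alpha \subseteq S_\alpha$. Embed $T_G$ into $G \wr S_\alpha$ via $g \mapsto ((g, e, \dots, e); e)$ and $T_\alpha$ via $\phi \mapsto ((e,\dots,e); \phi)$, and let $H$ be the subgroup they generate. A direct computation in the semidirect product shows that for any $\sigma \in S_\alpha$,
\[ (e; \sigma)\, (v; e)\, (e; \sigma^{-1}) = (v^{\sigma}; e), \]
so conjugating the embedded $T_G$ by the $S_\alpha$-part (which is entirely in $H$) yields elements $((e,\dots,g,\dots,e); e)$ with $g \in T_G$ in any chosen coordinate. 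Taking products, these reach all of $G^\alpha$, and together with $S_\alpha$ we recover the whole wreath product, giving the stated upper bound.

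For part 4, the lower bound $\Rank(\mathbb{Z}_d \wr S_\alpha) \geq 2$ is immediate: the wreath product contains $\mathbb{Z}_d^\alpha$ as a subgroup, and for $\alpha \geq 2$ this contains $\mathbb{Z}_d \times \mathbb{Z}_d$, which is not cyclic, so the ambient group is not cyclic either. For the upper bound in the case $\alpha = 2$, I would simply apply part 3 together with $\Rank(\mathbb{Z}_d) = 1$ and $\Rank(S_2) = 1$ to obtain the bound $2$. The main obstacle is the case $\alpha \geq 3$, where part 3 only gives $1 + 2 = 3$. Here I would produce an explicit pair of generators, for example
\[ a = ((1, 0, \dots, 0);\, (1, 2, \dots, \alpha)) \qquad \text{and} \qquad b = ((0,\dots,0);\, (1,2)), \]
and argue as follows. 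Projecting to $S_\alpha$, the images of $a, b$ are $\sigma = (1,2,\dots,\alpha)$ and $\tau = (1,2)$, which generate $S_\alpha$; hence $\langle a, b \rangle$ surjects onto $S_\alpha$. It then suffices to show that $\langle a, b \rangle \cap \mathbb{Z}_d^\alpha = \mathbb{Z}_d^\alpha$. Computing $a^\alpha = ((1, 1, \dots, 1); e)$ yields the all-ones vector, and conjugating suitable powers/commutators of $a$ and $b$ by the (already available) elements of $S_\alpha$ inside the group yields a collection of vectors in $\mathbb{Z}_d^\alpha$ whose $\mathbb{Z}$-span is all of $\mathbb{Z}_d^\alpha$. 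This final verification, making sure the exhibited relations really reach every standard basis vector $u_i$ rather than only combinations like $u_i + u_j$, is the most delicate point and is where I expect the argument to require the most care, particularly for even $d$.
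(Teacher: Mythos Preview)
Your treatment of parts 1--3 is correct and matches what the paper intends: the paper simply declares 1 and 2 ``straightforward'' and cites \cite{CRG17} for 3, and your explicit constructions are exactly the standard ones. Your lower bound in part 4 and the case $\alpha=2$ are also fine.

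The genuine gap is in part 4 for $\alpha\geq 3$: your proposed generators
\[
a = ((1,0,\dots,0);\,(1,2,\dots,\alpha)), \qquad b = ((0,\dots,0);\,(1,2))
\]
do \emph{not} generate $\mathbb{Z}_d\wr S_\alpha$ in general. Take $d=2$, $\alpha=3$. With the paper's convention $(v;\phi)(w;\psi)=(v\,w^{\phi};\phi\psi)$ one computes $\sigma\tau=(2,3)$, so $ab=((1,0,0);(2,3))$ and hence $(ab)^2=((1,0,0)+(1,0,0)^{(2,3)};e)=((2,0,0);e)=e$. Together with $b^2=e$ and $a^6=e$ (since $a^3=((1,1,1);e)$), the relations $a^6=b^2=(ab)^2=e$ force $\langle a,b\rangle$ to be a quotient of the dihedral group $D_{12}$; as $a^3\neq e$ lies in the kernel of the projection to $S_3$, one gets $\lvert\langle a,b\rangle\rvert=12<48=\lvert\mathbb{Z}_2\wr S_3\rvert$. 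So exactly at the point you flagged as delicate, the argument breaks: the intersection $\langle a,b\rangle\cap\mathbb{Z}_2^3$ is only $\{0,(1,1,1)\}$, and no amount of conjugation will produce a single $u_i$.

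The paper itself does not argue part 4 directly but defers to \cite[Lemma~5]{CRG16a}. If you want a self-contained proof, you must change the generators (for instance, attach the nontrivial base coordinate to the transposition rather than the long cycle, or use an $(\alpha-1)$-cycle avoiding the distinguished coordinate), and then verify carefully that the resulting base-group intersection is all of $\mathbb{Z}_d^{\alpha}$ for every $d\ge 2$.
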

\begin{proof}
Parts 1 and 2 are straightforward. For parts 3 and 4, see \cite[Corollary 5]{CRG17} and \cite[Lemma 5]{CRG16a}, respectively.
\end{proof}


\section{Finite groups} \label{finite}

The main tool of this section is the following structure theorem for $\Aut(A^G)$.

\begin{theorem}[\cite{CRG17}] \label{th:ICA}
Let $G$ be a finite group and $A$ a finite set of size $q \geq 2$. Let $[H_1], \dots, [H_r]$ be the list of all different conjugacy classes of subgroups of $G$. Let $\alpha_i :=\alpha_{[H_i]}(G ; A)$. Then,
\[ \Aut(A^G) \cong \prod_{i=1}^{r} \left( (N_{G}(H_i)/H_i) \wr S_{\alpha_i} \right). \]
\end{theorem}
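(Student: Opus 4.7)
My plan is to exploit the fact that $G$ finite makes the prodiscrete topology on $A^G$ discrete, so $\Aut(A^G)$ is simply the full group of $G$-equivariant bijections of the finite set $A^G$. The proof then proceeds by analysing how such a bijection acts on $G$-orbits, in three stages: decompose along conjugacy classes of stabilisers, pick good base points inside each class, and read off a wreath-product structure.

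The first observation is that any $G$-equivariant bijection $\tau$ satisfies $G_x = G_{\tau(x)}$ for every $x \in A^G$: indeed, $g \in G_x$ gives $g \cdot \tau(x) = \tau(g \cdot x) = \tau(x)$, and the reverse inclusion uses injectivity of $\tau$. Consequently, the partition
\[ A^G = \bigsqcup_{i=1}^{r} X_i, \qquad X_i := \bigsqcup_{[G_x] = [H_i]} Gx, \]
is $\Aut(A^G)$-invariant, which yields a direct product decomposition $\Aut(A^G) \cong \prod_{i=1}^{r} \Gamma_i$, where $\Gamma_i$ denotes the group of $G$-equivariant bijections of $X_i$. It therefore suffices to establish $\Gamma_i \cong (N_G(H_i)/H_i) \wr S_{\alpha_i}$ for each fixed $i$.

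Fix $i$ and choose, from each of the $\alpha_i$ orbits comprising $X_i$, a representative $x_{i,j}$ with $G_{x_{i,j}} = H_i$ exactly (such a representative exists because, within any orbit of type $[H_i]$, the points whose stabiliser equals $H_i$ form a nonempty $N_G(H_i)$-orbit). Each $\tau \in \Gamma_i$ then determines a permutation $\phi \in S_{\alpha_i}$ via $\tau(Gx_{i,j}) = Gx_{i,\phi(j)}$, together with elements $n_j \in G$ satisfying $\tau(x_{i,j}) = n_j \cdot x_{i,\phi(j)}$. The identity $H_i = G_{x_{i,j}} = G_{\tau(x_{i,j})} = n_j H_i n_j^{-1}$ forces $n_j \in N_G(H_i)$, and $n_j$ is determined modulo $H_i$ (since $H_i$ stabilises $x_{i,\phi(j)}$). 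The candidate isomorphism $\Gamma_i \to (N_G(H_i)/H_i) \wr S_{\alpha_i}$ is then $\tau \mapsto ((n_j H_i)_{j=1}^{\alpha_i}; \phi)$.

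The final step is to verify that this assignment is a bijective homomorphism. For surjectivity I reverse the construction: given arbitrary data $((n_j H_i); \phi)$ with $n_j \in N_G(H_i)$, set $\tau(g \cdot x_{i,j}) := g n_j \cdot x_{i,\phi(j)}$, which is well-defined because $n_j$ normalises $H_i$, and which is manifestly $G$-equivariant. Injectivity follows from the definition of the map. The point where I expect the real work to lie is in compatibility with the group operation: a direct computation of $\tau_1 \circ \tau_2$ at a base point $x_{i,j}$ yields $n_j' n_{\phi'(j)} \cdot x_{i,\phi(\phi'(j))}$ (where primes denote data for $\tau_2$), which should reproduce the wreath-product operation $(v;\phi)(w;\psi) = (v \cdot w^\phi; \phi \psi)$ of Section~\ref{basic} up to the usual convention on composition order and side of the $S_{\alpha_i}$-action on tuples. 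This bookkeeping---aligning conventions---is the only real obstacle; everything else is an unwinding of definitions.
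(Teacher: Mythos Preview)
The paper does not actually prove this theorem: it is stated as a citation from \cite{CRG17} and used as a black box. There is therefore no ``paper's own proof'' to compare against; your proposal supplies an argument where the paper supplies none.

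That said, your sketch is the standard and correct way to establish such a structure theorem, and it matches the approach of the cited reference. The key steps---observing that for finite $G$ continuity is automatic so $\Aut(A^G)$ is the full group of $G$-equivariant bijections, that such bijections preserve stabilisers pointwise and hence preserve the partition of $A^G$ by conjugacy class of stabiliser, and that the action on each block $X_i$ is encoded by a wreath product of $N_G(H_i)/H_i$ with the symmetric group on the orbits---are all sound. Your choice of base points with stabiliser exactly $H_i$ (rather than merely conjugate to $H_i$) is the right move to make the $N_G(H_i)/H_i$ factor appear cleanly. The only point to be careful about, as you note, is matching the composition order to the paper's convention $(v;\phi)(w;\psi)=(v\cdot w^{\phi};\phi\psi)$; this is purely notational.
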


Because of part 3 in Lemma \ref{le:basic}, it is now relevant to determine some values of the $\alpha_i$'s that appear in Theorem \ref{th:ICA}.

\begin{lemma} \label{alpha}
Let $G$ be a finite group and $A$ a finite set of size $q \geq 2$. Let $H$ be a subgroup of $G$.
\begin{enumerate}
\item $\alpha_{[G]}(G;A) = q$. 
\item $\alpha_{[H]}(G;A) = 1$ if and only if $[G : H] = 2$ and $q=2$.
\item If $q \geq 3$, then $\alpha_{[H]}(G;A)  \geq 3$. 
\end{enumerate} 
\end{lemma}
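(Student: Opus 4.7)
I begin with Part 1, which is immediate. The equation $g \cdot x = x$ for all $g \in G$ expands to $x(g^{-1}h) = x(h)$ for all $g,h \in G$, i.e.\ $x$ is constant; there are $q$ such configurations, and each is its own $G$-orbit, so $\alpha_{[G]}(G;A) = q$.

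For Parts 2 and 3 with $H \lneq G$, my plan is to exhibit an explicit family inside $X_H := \{x \in A^G : G_x = H\}$ and count the $G$-orbits it produces. As a preliminary, I would record the identity
\[
\alpha_{[H]}(G;A) \;=\; \frac{|X_H|}{[N_G(H):H]},
\]
which holds because each $G$-orbit with stabilizer class $[H]$ meets $X_H$ in exactly one $N_G(H)$-orbit of size $[N_G(H):H]$. Setting $\bar{H} := N_G(H)/H$, this is the same as saying $\alpha_{[H]}$ counts the $\bar{H}$-orbits on $X_H$, and $\bar{H}$ acts freely on the set $H \backslash G$ of right cosets of $H$ via $\bar{h} \cdot (Hg) = Hng$ (for $\bar{h} = nH$, $n \in N_G(H)$).

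The key construction: for each right coset $p \in H \backslash G$ and each ordered pair $a \ne b$ in $A$, let $x_{p,a,b}(g) = a$ when $g \in p$ and $= b$ otherwise. A short direct check yields $G_{x_{p,a,b}} = H$, so all these configurations lie in $X_H$. The $\bar{H}$-action sends $x_{p,a,b}$ to $x_{\bar{h}\cdot p,\,a,b}$, so two such configurations are $\bar{H}$-equivalent precisely when the ordered pairs $(a,b)$ match and the cosets lie in the same (free) $\bar{H}$-orbit on $H\backslash G$. With $n := [G:H]$ and $m := [N_G(H):H]$, this free action has $n/m$ orbits, so the construction produces $(n/m)\cdot q(q-1)$ candidate $\bar{H}$-orbits in $X_H$.

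For $n \geq 3$, configurations from distinct ordered pairs have distinct value multisets (exactly one $a$ and $n-1$ $b$'s), so none of the candidates collapse and $\alpha_{[H]} \geq (n/m)\,q(q-1) \geq q(q-1)$. For $n=2$, $H$ is normal with $m=2$ and the identification $x_{p,a,b} = x_{p',b,a}$ (where $p'$ is the other coset) merges ordered pairs into unordered ones, leaving $\alpha_{[H]} \geq \binom{q}{2}$. Parts 2 and 3 then follow immediately: for Part 3 with $q \geq 3$ one has $\alpha_{[G]} = q \geq 3$, $\binom{q}{2} \geq 3$ when $n=2$, and $q(q-1) \geq 6$ when $n \geq 3$; for Part 2, $\alpha_{[H]}=1$ excludes $H=G$ (by Part 1), as well as both $q \geq 3$ and $n \geq 3$, forcing $q = n = 2$, while the converse is an easy count showing that the two non-constant configurations constant on cosets of $H$ form a single $G$-orbit. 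The only moderately delicate step I anticipate is the bookkeeping of the identification $x_{p,a,b} = x_{p',b,a}$ in the $n=2$ case, together with the routine verification $G_{x_{p,a,b}} = H$.
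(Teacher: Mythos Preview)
Your argument is correct. The underlying construction is the same as the paper's: the configurations $z_1,z_2,z_3$ in the paper are exactly your $x_{H,1,0}$, $x_{H,2,0}$, $x_{H,1,2}$. The difference is one of scope and organisation. The paper cites Parts~1 and~2 from earlier work and, for Part~3, simply exhibits three such configurations and observes that they lie in distinct $G$-orbits because they use pairwise distinct value sets. You instead prove all three parts directly, by passing through the orbit-counting identity $\alpha_{[H]} = \lvert X_H\rvert / [N_G(H):H]$ and enumerating the $\bar H$-orbits on the full family $\{x_{p,a,b}\}$; this yields the sharper quantitative bounds $\alpha_{[H]}\geq q(q-1)$ for $[G:H]\geq 3$ and $\alpha_{[H]}=\binom{q}{2}$ for $[G:H]=2$, from which Parts~2 and~3 fall out uniformly. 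So your route is more self-contained and gives more information, at the cost of a bit more bookkeeping (the free $\bar H$-action on $H\backslash G$ and the $n=2$ identification $x_{p,a,b}=x_{p',b,a}$), while the paper's route is shorter but leans on external references for two of the three parts.
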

\begin{proof}
Parts 1 and 2 correspond to Remark 1 and Lemma 5 in \cite{CRG17}, respectively. For part 2, Suppose that $q \geq 3$ and $\{0,1,2 \} \subseteq A$. Define configurations $z_1, z_2, z_3 \in A^G$ as follows,
\[ 
z_1 (g)  = \begin{cases}
1 & \text{if } g \in H \\
0 & \text{if } g \not\in H, 
\end{cases} \ \ 
z_2 (g)  = \begin{cases}
2 & \text{if } g \in H \\
0 & \text{if } g \not\in H, 
\end{cases} \ \ 
z_3 (g)  = \begin{cases}
1 & \text{if } g \in H \\
2 & \text{if } g \not\in H, 
\end{cases} \ \ 
\]
All three configurations are in different orbits and  $G_{z_i} = H$, for $i=1,2,3$. Hence $\alpha_{[H]}(G;A) \geq 3$. 
\end{proof}

Although we shall not use explicitly part 3 of the previous lemma, the result is interesting as it shows that, for $q \geq 3$, our upper bounds cannot be refined by a more careful examination of the values of the $\alpha_i's$, as, for all $\alpha \geq 3$, we have $\Rank(S_\alpha) = 2$.


\subsection{Cyclic and dihedral groups}

The rank of $\Aut(A^G)$ when $G$ is a finite cyclic group has been examined in detail in \cite{CRG16a}. Let $\dd(n)$ be number of divisors of $n$, including $1$ and $n$ itself. Let $\dd_-(n)$ and $\dd_+(n)$ be the number of odd and even divisors of $n$, respectively.

\begin{theorem}[Theorem 4 in \cite{CRG16a}]
Let $n \geq 2$ be an integer and $A$ a finite set of size $q \geq 2$.
\begin{description}
\item[(i)] If $n$ is not a power of $2$, then
\[ \Rank( \Aut(A^{\mathbb{Z}_n})  ) = \begin{cases}
\dd(n) + \dd_+(n)  - 1 + \epsilon(n,2) & \text{if } q=2 \text{ and } n \in 2\mathbb{Z}; \\ 
\dd(n) + \dd_+(n) + \epsilon(n,q), & \text{otherwise;}
\end{cases} \]
where $0 \leq \epsilon(n,q) \leq \dd(n) - \dd_+(n) - 2$. 
\item[(ii)] If $n = 2^k$, then
\[ \Rank( \Aut(A^{\mathbb{Z}_{2^k}})  )  = \begin{cases}
2 \dd(2^k) - 2 = 2k & \text{if } q=2; \\
2 \dd(2^k) -1 = 2k + 1 & \text{if } q \geq 3.
\end{cases} \]
\end{description}
\end{theorem}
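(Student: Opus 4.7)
\medskip

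\noindent\textbf{Proof proposal.} The plan is to reduce the computation of $\Rank(\Aut(A^{\mathbb{Z}_n}))$ to a direct product of wreath products via Theorem \ref{th:ICA}, and then to extract upper and lower bounds by combining explicit generation with abelianization arguments. First, I would apply Theorem \ref{th:ICA} to $G = \mathbb{Z}_n$: since every subgroup of a cyclic group is normal and subgroups are indexed by divisors of $n$ (with $H_d = d\mathbb{Z}_n$, $N_G(H_d)/H_d = \mathbb{Z}_d$), we get
\[
\Aut(A^{\mathbb{Z}_n}) \;\cong\; \prod_{d \mid n} \bigl(\mathbb{Z}_d \wr S_{\alpha_d}\bigr),
\]
where $\alpha_d := \alpha_{[H_d]}(\mathbb{Z}_n; A)$. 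Using Lemma \ref{alpha}, I would observe that $\alpha_1 = q$ and $\alpha_d = 1$ precisely when $q = 2$ and $d = 2$; otherwise $\alpha_d \geq 2$ (and $\geq 3$ when $q \geq 3$).

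Next, the lower bound. The abelianization of $\mathbb{Z}_d \wr S_{\alpha}$ equals $\mathbb{Z}_d \times \mathbb{Z}_2$ when $\alpha \geq 2$ and $\mathbb{Z}_d$ when $\alpha = 1$, since $S_\alpha^{\mathrm{ab}} = \mathbb{Z}_2$ for $\alpha \geq 2$ and the permutation action of $S_\alpha$ collapses the $\alpha$ copies of $\mathbb{Z}_d$ to one. Hence the abelianization of $\Aut(A^{\mathbb{Z}_n})$ is
\[
\mathbb{Z}_2^{\,\delta(n,q)} \times \prod_{d \mid n} \mathbb{Z}_d,
\]
where $\delta(n,q) = \dd(n)$ unless $q=2$ and $n$ is even, in which case $\delta(n,q) = \dd(n) - 1$ (the divisor $d=2$ contributes no $\mathbb{Z}_2$ factor). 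Since $\Rank(G) \geq \Rank(G^{\mathrm{ab}})$ and the rank of a finite abelian group is $\max_p \dim_{\mathbb{F}_p}(G^{\mathrm{ab}}/pG^{\mathrm{ab}})$, I would evaluate the $p=2$ contribution: $\dim_{\mathbb{F}_2}(G^{\mathrm{ab}}/2G^{\mathrm{ab}}) = \delta(n,q) + \dd_+(n)$, the first summand coming from the $\mathbb{Z}_2$-factors and the second from the even divisors in $\prod_d \mathbb{Z}_d$. This yields the desired lower bound $\dd(n) + \dd_+(n)$ (resp.\ $\dd(n) + \dd_+(n) - 1$ in the special case $q = 2$, $n$ even).

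For the upper bound I would construct a generating set. The basic ingredient is Lemma \ref{le:basic}(4), giving $\Rank(\mathbb{Z}_d \wr S_{\alpha_d}) = 2$ for $d,\alpha_d \geq 2$, together with Lemma \ref{le:basic}(3) to glue wreath products. The cleanest case is $n = 2^k$: each factor $\mathbb{Z}_{2^j} \wr S_{\alpha_{2^j}}$ is a $2$-group (modulo the $S_{\alpha_{2^j}}$-quotient), so both lower and upper bounds are governed entirely by the $\mathbb{F}_2$-dimension computation above, producing exactly $2k+1$ if $q \geq 3$ and $2k$ if $q = 2$. The harder case is when $n$ is not a prime power: here the rank of each factor $\mathbb{Z}_d \wr S_{\alpha_d}$ is only $2$, but a single element of the product can supply generators for several factors simultaneously via the Chinese remainder theorem on the $\mathbb{Z}_d$ components, and the permutation parts $S_{\alpha_d}$ can share generators because $S_{\alpha_d}^{\mathrm{ab}} = \mathbb{Z}_2$ is the only constraint modulo the derived subgroup. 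Carefully exploiting these savings, while respecting the obstructions coming from the $\mathbb{Z}_2$-coranks and from divisors of different parities, I expect to match the lower bound up to an error term $\epsilon(n,q)$ bounded by the number of odd divisors (minus 2), which accounts for the slack between the abelianization estimate and what an optimal generating set achieves.

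The main obstacle is the upper bound in case (i): showing that one can amalgamate generators across factors indexed by coprime divisors without wasting generators along the way, and quantifying the residual loss as the $\epsilon(n,q)$ term. This requires a careful inductive construction, threading a single ``diagonal'' generator through several wreath factors while ensuring it projects to a full generating pair in each; the cyclic case $n = 2^k$ succeeds precisely because all factors are $2$-local, so the abelianization count is sharp, whereas in mixed-prime situations the interplay between the $p$-parts for different primes $p \mid n$ forces the looser bounds reflected in $\epsilon(n,q)$.
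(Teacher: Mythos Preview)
The paper does not contain a proof of this theorem: it is quoted verbatim from \cite{CRG16a} and stated without argument, so there is nothing in the present paper to compare your proposal against.

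As for the soundness of your outline, the structural decomposition via Theorem~\ref{th:ICA} and the lower bound via abelianisation are both correct. The abelianisation of $\mathbb{Z}_d\wr S_{\alpha}$ is indeed $\mathbb{Z}_d\times\mathbb{Z}_2$ for $\alpha\geq 2$ and $\mathbb{Z}_d$ for $\alpha=1$, and the $2$-rank count $\delta(n,q)+\dd_+(n)$ matches the claimed lower bound in every case.

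The genuine gap is in your upper bound for case~(ii) with $q\geq 3$. Your assertion that ``both lower and upper bounds are governed entirely by the $\mathbb{F}_2$-dimension computation'' because the factors are ``$2$-groups (modulo the $S_{\alpha}$-quotient)'' is not a valid argument: the wreath products $\mathbb{Z}_{2^j}\wr S_{\alpha_{2^j}}$ are \emph{not} $2$-groups (the symmetric top has order divisible by many primes), so no Burnside basis theorem applies, and there is no general principle forcing the rank of the direct product to equal the $2$-rank of its abelianisation. Concretely, the naive bound $\sum_{j=0}^{k}\Rank(\mathbb{Z}_{2^j}\wr S_{\alpha_{2^j}})$ equals $2(k+1)=2k+2$ when $q\geq 3$, one more than the target $2k+1$; you must exhibit an explicit saving, for instance by showing that $S_q\times(\mathbb{Z}_{2}\wr S_{\alpha_2})$ is $3$-generated rather than $4$-generated. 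This is doable but requires an actual construction, not an appeal to $2$-locality. (By contrast, for $q=2$ the naive sum already gives $1+1+2(k-1)=2k$, so no saving is needed there.) The same issue recurs in case~(i), where controlling $\epsilon(n,q)$ from above demands the diagonal generating-set constructions you correctly identify as the main obstacle; your sketch there is honest about this, but does not yet supply the mechanism.
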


\begin{example}
If $p$ is an odd prime, the previous theorem implies that
\[ \Rank( \Aut(A^{\mathbb{Z}_p})  ) = 2. \]

\end{example}

We now turn our attention to the dihedral group $D_{2n}$ of order $2n$.

\begin{theorem}[\cite{CRSA19}] \label{dihedral}
Let $n \geq 3$ be an integer and $A$ a finite set of size at least $2$. 
\[  \Rank(\Aut(A^{D_{2n}})) =
\begin{cases}
  \dd_-(2n) + 2\dd_+(2n) - 1 + \epsilon_1 &  \text{if $n$ is odd and $q = 2$,} \\
  \dd_-(2n) + 2\dd_+(2n) + \epsilon_2 & \text{if $n$ is odd and $q \geq 3$,} \\
     \dd_-(2n) + 2\dd_+(2n) + 2 \dd_+(n) - 1 + \epsilon_1   & \text{if $n$ is even and $q = 2$,}\\
     \dd_-(2n) + 2\dd_+(2n) + 4 \dd_+(n)  + \epsilon_2 & \text{if $n$ is even and $q \geq 3$,}
\end{cases} \]
where
\begin{align*}
0 & \leq \epsilon_1 \leq \dd(2n) - 2, \\
 0 & \leq \epsilon_2 \leq \dd(2n) - 1.
\end{align*}
\end{theorem}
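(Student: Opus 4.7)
The strategy is to apply Theorem~\ref{th:ICA} and reduce the computation to the ranks of the factors of the resulting product decomposition, following the template of the cyclic case in \cite{CRG16a}. Using the presentation $D_{2n} = \langle r, s \mid r^n = s^2 = 1, \, srs = r^{-1}\rangle$, one first classifies the conjugacy classes of subgroups: the $\dd(n)$ normal cyclic subgroups $\langle r^{n/d}\rangle$ contribute one class each, and the dihedral subgroups of order $2d$ contribute a single class when $n/d$ is odd and split into two classes when $n/d$ is even. This parity split is exactly what separates the two cases in the theorem statement.

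A direct normalizer computation shows $N_{D_{2n}}(\langle r^{n/d}\rangle)/\langle r^{n/d}\rangle \cong D_{2n/d}$ for the cyclic classes, while for a dihedral subgroup of order $2d$ the quotient $N/H$ is trivial when $n/d$ is odd and isomorphic to $\mathbb{Z}_2$ when $n/d$ is even. Substituting into Theorem~\ref{th:ICA} yields, for $n$ odd,
\[ \Aut(A^{D_{2n}}) \cong \prod_{d \mid n} \bigl( D_{2n/d} \wr S_{\alpha_d^c}\bigr) \times \prod_{d \mid n} S_{\alpha_d^D}, \]
and for $n$ even an analogous but larger product containing, in addition, $2\dd_+(n)$ factors of the form $\mathbb{Z}_2 \wr S_{\alpha}$ arising from the split dihedral classes. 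Re-expressing $\dd(n)$ and $\dd_+(n)$ in terms of $\dd_{\pm}(2n)$ already reproduces the leading terms in the statement.

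The upper bound then follows by applying Lemma~\ref{le:basic}(2,3,4) factorwise, using $\Rank(D_{2k}) \leq 2$, $\Rank(S_\alpha) \leq 2$ for $\alpha \geq 3$ (a hypothesis guaranteed by Lemma~\ref{alpha}(3) whenever $q \geq 3$), and $\Rank(\mathbb{Z}_2 \wr S_\alpha) = 2$ from Lemma~\ref{le:basic}(4). When $q = 2$, Lemma~\ref{alpha}(2) forces $\alpha_{[H]} = 1$ precisely at the index-$2$ subgroups --- $\langle r\rangle$ always, plus the two index-$2$ dihedral classes when $n$ is even --- and the corresponding wreath factors collapse to rank-$1$ pieces, producing the ``$-1$'' correction in the $q = 2$ formulas. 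For the matching lower bound, I would pass to the abelianization, which factors as a direct product whose $\mathbb{F}_2$-rank can be computed from the known abelianizations of $D_{2k} \wr S_\alpha$ and $\mathbb{Z}_2 \wr S_\alpha$; summing the contributions recovers the formula up to the slack captured by $\epsilon_i$.

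I expect the main obstacle to be pinning down the $\epsilon$ terms. The inequalities $\Rank(D_{2k} \wr S_\alpha) \leq \Rank(D_{2k}) + \Rank(S_\alpha)$ from Lemma~\ref{le:basic}(3) and $\Rank(G_1 \times G_2) \leq \Rank(G_1) + \Rank(G_2)$ from Lemma~\ref{le:basic}(2) are generally not tight, and the discrepancy between the additive upper estimate and the actual rank of the combined product --- with three different factor shapes (dihedral-wreath, cyclic-wreath, and plain symmetric) simultaneously in play --- is what $\epsilon_1$ and $\epsilon_2$ absorb. Bounding this discrepancy within $\dd(2n) - 2$ and $\dd(2n) - 1$ respectively is the technical step that requires the careful case analysis.
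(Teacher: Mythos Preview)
The paper does not actually contain a proof of Theorem~\ref{dihedral}. In Section~\ref{intro} the author states explicitly that ``the technical proof of Theorem~\ref{dihedral}, about dihedral groups, has been omitted,'' and the theorem is simply attributed to \cite{CRSA19}. Hence there is no in-paper argument to compare your proposal against.

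That said, your outline is the natural one and is entirely in the spirit of the surrounding material: it instantiates Theorem~\ref{th:ICA} for $G=D_{2n}$ using the well-known classification of subgroups of the dihedral group, computes the normaliser quotients (trivial, $\mathbb{Z}_2$, or dihedral, exactly as you state), and then bounds the rank of the resulting product via Lemmas~\ref{le:basic} and~\ref{alpha}. Your observation that the lower bound should come from the abelianisation rather than from the cruder $\prod \mathbb{Z}_2$ quotient of Proposition~\ref{lower-bound} is important: for $n$ odd and $q\ge 3$, Proposition~\ref{lower-bound} alone yields only $r(D_{2n})=2\dd(n)$, whereas the stated lower bound $\dd_-(2n)+2\dd_+(2n)=3\dd(n)$ is strictly larger, so a finer quotient is indeed required. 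This is consistent with the approach in \cite{CRG16a} for the cyclic case, which you correctly identify as the template. The residual uncertainty you flag---tightening the gap between the additive upper bound from Lemma~\ref{le:basic} and the abelianisation lower bound to produce the stated ranges for $\epsilon_1,\epsilon_2$---is precisely the ``technical'' part the present paper declines to reproduce.
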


\begin{example}
Let $A$ be a finite set of size $q \geq 2$. By the previous theorem,
\begin{align*}
5 \leq \Rank(\Aut(A^{D_6})) \leq  7 \quad & \text{ if } q = 2, \\
6  \leq \Rank(\Aut(A^{D_6})) \leq 9  \quad & \text{ if } q \geq 3.
\end{align*}
On the other hand,
\begin{align*}
  10 \leq \Rank(\Aut(A^{D_8})) \leq 12 \quad & \text{ if } q = 2, \\
15 \leq \Rank(\Aut(A^{D_8})) \leq 18 \quad & \text{ if } q \geq 3.  
\end{align*}
\end{example}


\subsection{Dedekind groups}

Recall that $r(G)$ denotes the total number of conjugacy classes of subgroups of $G$ and $r_i(G)$ the number of conjugacy classes $[H]$ such that $H$ has index $i$ in $G$. The following results are an improvement of \cite[Corollary 5]{CRG17}.

\begin{theorem}\label{cor:bound}
Let $G$ be a finite Dedekind group and $A$ a finite set of size $q \geq 2$. Let $r:=r(G)$ and $r_i := r_i(G)$. Let $p_1, \dots, p_s$ be the prime divisors of $\vert G \vert$ and define $r_P :=\sum_{i=1}^s r_{p_i}$. Then,
\[ \Rank(\Aut(A^G))  \leq \begin{cases}
(r - r_P  - 1) \Rank(G) + 2 r - r_2 - 1,  & \text{ if } q=2, \\
(r - r_P  - 1) \Rank(G) + 2 r, & \text{ if } q \geq 3.
\end{cases} \]
\end{theorem}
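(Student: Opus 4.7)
The plan is to apply Theorem \ref{th:ICA} and bound the rank of each factor in the resulting direct product, using Lemma \ref{le:basic} together with Lemma \ref{alpha} to sharpen bounds for the ``small index'' factors.

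Since $G$ is Dedekind, every subgroup is normal, so $N_G(H) = G$ for every $H \leq G$, and every conjugacy class of subgroups is a singleton. Hence, with $\{H_1, \dots, H_r\}$ listing the subgroups of $G$ and $\alpha_i := \alpha_{[H_i]}(G;A)$, Theorem \ref{th:ICA} gives
\[ \Aut(A^G) \;\cong\; \prod_{i=1}^{r} \bigl( (G/H_i) \wr S_{\alpha_i} \bigr). \]
Applying Lemma \ref{le:basic}(2) yields $\Rank(\Aut(A^G)) \leq \sum_{i=1}^r \Rank((G/H_i)\wr S_{\alpha_i})$, and then I will estimate each summand by splitting the index $i$ into three classes: (a) $H_i = G$; (b) $[G : H_i] = p_j$ for some prime divisor $p_j$ of $|G|$; (c) all remaining $H_i$.

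For class (a) the factor collapses to $S_{\alpha_i}$ with $\alpha_i = q$ by Lemma \ref{alpha}(1), contributing $\Rank(S_q)$, which is $1$ if $q=2$ and $2$ if $q\geq 3$. For class (b) we have $G/H_i \cong \mathbb{Z}_{p_j}$; when $q \geq 3$ Lemma \ref{alpha}(3) gives $\alpha_i \geq 3$ and when $q = 2$ with $p_j$ odd Lemma \ref{alpha}(2) forces $\alpha_i \geq 2$, so Lemma \ref{le:basic}(4) applies and the factor contributes exactly $2$. The one genuinely smaller subcase is $q = 2$ and $[G:H_i] = 2$: then $\alpha_i = 1$ by Lemma \ref{alpha}(2), and $(\mathbb{Z}_2)\wr S_1 \cong \mathbb{Z}_2$ contributes only $1$; there are $r_2$ such classes. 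For class (c), which contains $r - r_P - 1$ subgroups, I use Lemma \ref{le:basic}(1) and (3) to bound the contribution by $\Rank(G/H_i) + \Rank(S_{\alpha_i}) \leq \Rank(G) + 2$, since $\alpha_i \geq 2$ ensures $\Rank(S_{\alpha_i}) \leq 2$.

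Summing everything up yields, for $q \geq 3$,
\[ 2 + 2 r_P + (r-r_P-1)(\Rank(G)+2) = (r-r_P-1)\Rank(G) + 2r, \]
and for $q = 2$,
\[ 1 + r_2 + 2(r_P - r_2) + (r-r_P-1)(\Rank(G)+2) = (r-r_P-1)\Rank(G) + 2r - r_2 - 1, \]
which are precisely the claimed bounds. The only delicate point is verifying that Lemma \ref{le:basic}(4) is applicable in class (b)---that is, confirming the lower bound $\alpha_i \geq 2$ for every prime-index subgroup except in the excluded $q=2$, $[G:H_i]=2$ case---which is exactly what Lemma \ref{alpha}(2) supplies; everything else is bookkeeping.
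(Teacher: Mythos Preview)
Your proof is correct and follows essentially the same approach as the paper: apply Theorem~\ref{th:ICA} (with $N_G(H_i)=G$ since $G$ is Dedekind), split the factors according to whether $H_i=G$, $[G:H_i]$ is prime, or neither, bound the prime-index factors by $2$ via Lemma~\ref{le:basic}(4) and the remaining ones by $\Rank(G)+2$ via Lemma~\ref{le:basic}(1),(3), and use Lemma~\ref{alpha} to sharpen the $q=2$ case. If anything, you are slightly more explicit than the paper in checking that $\alpha_i\geq 2$ holds wherever Lemma~\ref{le:basic}(4) is invoked.
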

\begin{proof}
Let $H_1, H_2, \dots, H_r$ be the list of different subgroups of $G$ with $H_r = G$. If $H_i$ is a subgroup of index $p_k$, then $(G/H_i)\wr S_{\alpha_i} \cong  \mathbb{Z}_{p_k} \wr S_{\alpha_i}$ is a group with rank $2$, by Lemma \ref{le:basic}. Thus, by Theorem \ref{th:ICA} we have:
\begin{align*}
\Rank(\Aut(A^G)) & \leq  \sum_{i=1}^{r-1} \Rank((G/H_i)\wr S_{\alpha_i}) + \Rank(S_q)  \\ 
& \leq \sum_{[G:H_i] = p_k} 2 + \sum_{[G:H_i] \neq p_k } (\Rank(G) + 2) + 2\\
& =  2 r_P +  (r - r_P - 1 )(\Rank(G) + 2) + 2 \\ 
& =  (r - r_P  - 1) \Rank(G) + 2r .  
\end{align*}
If $q=2$, we may improve this bound by using Lemma \ref{alpha}:
\begin{align*}
\Rank(\Aut(A^G)) & \leq \sum_{[G : H_i]=2}\Rank((G/H_i)\wr S_1) + \sum_{[G : H_i] = p_k \neq 2}\Rank((G/H_i)\wr S_{\alpha_i}) \\ 
& + \sum_{1 \neq [G:H_i] \neq p_k} \Rank((G/H_i)\wr S_{\alpha_i}) + \Rank(S_2) \\  
& = r_2 + 2(r_P - r_2) +  (r - r_P - 1 )(\Rank(G) + 2)  + 1   \\
& =   (r - r_P  - 1) \Rank(G) + 2r - r_2 - 1.  
\end{align*}

 \end{proof}

\begin{example}
The smallest example of a nonabelian Dedekind group is the quaternion group 
\[ Q_8 = \langle x,y \; \vert \; x^4 = x^2 y^{-2} = y^{-1}xy x  = \id  \rangle, \]
which has order $8$. It is generated by two elements, and it is noncyclic, so $\Rank(Q_8) = 2$. Moreover, $r = r(Q_8) = 6$ and, as $2$ is the only prime divisor of $8$, we have $r_P  = r_2 = 3$. Therefore,
\[ \Rank(\Aut(A^{Q_8}))  \leq \begin{cases}
(6 - 3 - 1) \cdot 2 + 2 \cdot 6 - 3 - 1 = 12,  & \text{if } q=2, \\
(6 - 3  - 1) \cdot 2 + 2 \cdot 6 = 16, & \text{if } q \geq 3.
\end{cases} \]
\end{example}
 
 \begin{corollary}
 Let $G$ be a finite Dedekind group and $A$ a finite set of size $q \geq 2$. With the notation of Theorem \ref{cor:bound},
\[ \Rank(\End(A^G))  \leq \begin{cases}
(r - r_P  - 1) \Rank(G) + \frac{1}{2} r (r+5) - 2r_2 - 1,  & \text{if } q=2 \\
(r - r_P  - 1) \Rank(G) + \frac{1}{2} r (r+5), & \text{otherwise.}
\end{cases} \]
 \end{corollary}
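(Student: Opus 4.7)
The approach is to apply the basic identity (\ref{rank-formula}) to the finite monoid $M = \End(A^G)$ (which is finite since both $G$ and $A$ are) and its group of units $U = \Aut(A^G)$, which gives
\[ \Rank(\End(A^G)) = \Rank(\End(A^G) : \Aut(A^G)) + \Rank(\Aut(A^G)). \]
The second summand is already bounded above by Theorem \ref{cor:bound}, so the only further ingredient needed is a bound on the relative rank $\Rank(\End(A^G) : \Aut(A^G))$.

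For finite Dedekind groups this relative rank has been determined in \cite[Theorem 7]{CRG17}, and I would simply quote that result. Numerically, its contribution is $\tfrac{1}{2} r(r+1)$ when $q \geq 3$, and decreases by $r_2$ when $q = 2$ (reflecting the exceptional behaviour of index-$2$ subgroups isolated in Lemma \ref{alpha}(2)). Substituting into the displayed equation above and applying the elementary identity $\tfrac{1}{2} r(r+1) + 2r = \tfrac{1}{2} r(r+5)$ yields the two stated inequalities; for $q = 2$ the corrections from Theorem \ref{cor:bound} and from \cite[Theorem 7]{CRG17} combine as $(-r_2 - 1) + (-r_2) = -2r_2 - 1$, producing the extra term in the first case.

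No conceptual obstacle arises; the entire argument is a one-line appeal to two previously established results followed by an arithmetic simplification. The only delicate point is bookkeeping: one must verify that the relative-rank formula from \cite[Theorem 7]{CRG17} is quoted in its precise form and that its $q = 2$ correction lines up with the one already present in Theorem \ref{cor:bound}, so that the two small adjustments add to give exactly $-2r_2 - 1$ rather than some other expression.
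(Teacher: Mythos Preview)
Your proposal is correct and follows exactly the paper's own proof: combine identity (\ref{rank-formula}) with Theorem \ref{cor:bound} and the relative-rank bound $\binom{r}{2}+r$ (respectively $\binom{r}{2}+r-r_2$ when $q=2$) from \cite[Theorem 7]{CRG17}, then simplify. Your arithmetic and the handling of the $q=2$ corrections are accurate.
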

 \begin{proof}
 The result follows by Theorem \ref{cor:bound}, identity (\ref{rank-formula}) and the basic upper bound for the relative rank that follows from \cite[Theorem 7]{CRG17}:
 \[ \Rank(\End(A^G):\Aut(A^G)) \leq \begin{cases} 
 \binom{r}{2} + r - r_2 & \text{if } q=2 \\
 \binom{r}{2} + r, & \text{otherwise.}
 \end{cases}  \]

 \end{proof}

\subsection{Arbitrary finite groups}

Now focus now when $G$ is not necessarily a Dedekind group. Because of the decomposition of Theorem \ref{th:ICA}, it is relevant to bound the rank of $N_{G}(H)/H$, when $H$ is a subgroup of $G$, in order to bound the rank of $\Aut(A^G)$. When the index of $H$ in $G$ is prime, we may find a tight bound. 

\begin{lemma}\label{le-aux1}
Let $G$ be a finite group and $H$ a subgroup of $G$ of prime index $p$. Let $A$ be a finite set of size $q \geq 2$ and $\alpha := \alpha_{[H]}(G;A)$. Then
\[ \Rank\left( (N_{G}(H)/H) \wr S_{\alpha} \right) \leq \begin{cases}
1 & \text{if } p=2 \text{ and } q=2\\
2 & \text{otherwise}.
\end{cases} \] 
\end{lemma}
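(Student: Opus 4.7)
The plan is to exploit the primality of $[G:H] = p$ to drastically restrict the structure of $N_G(H)/H$, and then to combine this with the control on $\alpha$ afforded by Lemma \ref{alpha} and the wreath product estimates in Lemma \ref{le:basic}.

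First I would observe that $H \leq N_G(H) \leq G$ together with $[G:H] = p$ prime forces, via multiplicativity of indices, either $N_G(H) = H$ or $N_G(H) = G$. Hence $N_G(H)/H$ is either trivial or cyclic of order $p$, and consequently $(N_G(H)/H) \wr S_\alpha$ is isomorphic to $S_\alpha$ or to $\mathbb{Z}_p \wr S_\alpha$.

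Next I would split into three cases according to $(p,q)$. If $p = 2$ and $q = 2$, Lemma \ref{alpha}(2) gives $\alpha = 1$, and since subgroups of index $2$ are always normal we get $N_G(H)/H \cong \mathbb{Z}_2$, so the wreath product is $\mathbb{Z}_2 \wr S_1 \cong \mathbb{Z}_2$, of rank $1$. If $p = 2$ and $q \geq 3$, Lemma \ref{alpha}(3) gives $\alpha \geq 3$, and the wreath product $\mathbb{Z}_2 \wr S_\alpha$ has rank $2$ by Lemma \ref{le:basic}(4). If $p \geq 3$, Lemma \ref{alpha}(2) forces $\alpha \neq 1$; when $\alpha = 0$ the wreath product is trivial, and when $\alpha \geq 2$ the wreath product is either $S_\alpha$ (rank at most $2$ since $\Rank(S_2) = 1$ and $\Rank(S_\alpha) = 2$ for $\alpha \geq 3$) or $\mathbb{Z}_p \wr S_\alpha$ (rank $2$ by Lemma \ref{le:basic}(4)).

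The main obstacle, if one can be identified, is simply being attentive to the boundary case $\alpha = 1$: this is precisely where Lemma \ref{le:basic}(4) cannot be applied and also where the sharper bound of $1$ (rather than $2$) emerges. Once the relationship between $(p,q)$ and $\alpha$ is unravelled via Lemma \ref{alpha}, the rest of the argument is bookkeeping with no genuine computation.
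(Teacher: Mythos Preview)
Your proof is correct and follows essentially the same line as the paper's: both reduce to the dichotomy $N_G(H)=H$ or $N_G(H)=G$ via the primality of the index, identify $N_G(H)/H$ as trivial or $\mathbb{Z}_p$, and then invoke Lemma~\ref{alpha} and Lemma~\ref{le:basic}(4). Your case analysis is simply more explicit than the paper's terse ``for the rest of the cases $\Rank(\mathbb{Z}_p\wr S_\alpha)=2$''; the only superfluous step is allowing $\alpha=0$, which cannot occur since the indicator configuration of $H$ always has stabiliser exactly $H$.
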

\begin{proof}
By Lagrange's theorem, $N_{G}(H)=H$ or $N_{G}(H)=G$. Hence, in order to find an upper bound for the above rank, we assume that $H$ is normal in $G$. As the index is prime, $G/H \cong \mathbb{Z}_p$. If $p=2$ and $q=2$, Lemma \ref{alpha} shows that $\alpha = 1$, so $\Rank(\mathbb{Z}_2 \wr S_{1})  = 1$. For the rest of the cases we have that $\Rank(\mathbb{Z}_p \wr S_{\alpha}) = 2$, by Lemma \ref{le:basic}.
\end{proof}

In general, by Lemma \ref{le:basic}, $\Rank(N_{G}(H)/H) \leq \Rank(N_{G}(H))$. A natural way to bound this for all $H \leq G$ is to use the \emph{subgroup rank} of $G$:
\[ \SRank(G) := \max\{ \Rank(K) : K \leq G \}. \]

\begin{lemma}\label{le-aux2}
Let $G$ be a finite group and $H$ a subgroup of $G$. Let $A$ be a finite set of size $q \geq 2$ and $\alpha := \alpha_{[H]}(G;A)$. Then,
\[ \Rank\left( (N_{G}(H)/H) \wr S_{\alpha} \right) \leq \SRank(G) + 2 \leq \log_2(\vert G \vert) + 2. \] 
\end{lemma}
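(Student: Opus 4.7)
The plan is to chain together the inequalities already collected in Lemma~\ref{le:basic} and a classical bound on the rank of a finite group.

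First I would apply part~3 of Lemma~\ref{le:basic} to the wreath product on the left, obtaining
\[ \Rank\!\left( (N_{G}(H)/H) \wr S_{\alpha} \right) \leq \Rank(N_{G}(H)/H) + \Rank(S_\alpha). \]
Since $\Rank(S_\alpha) \leq 2$ for every $\alpha \geq 1$ (the symmetric group is generated by a transposition and a cycle, and for $\alpha \leq 2$ it is cyclic), and since $\Rank(N_G(H)/H) \leq \Rank(N_G(H))$ by part~1 of Lemma~\ref{le:basic} (as $H \trianglelefteq N_G(H)$), we get
\[ \Rank\!\left( (N_{G}(H)/H) \wr S_{\alpha} \right) \leq \Rank(N_G(H)) + 2. \]
The subgroup $N_G(H)$ is a subgroup of $G$, so $\Rank(N_G(H)) \leq \SRank(G)$ by definition of the subgroup rank. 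This yields the first inequality.

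For the second inequality $\SRank(G) \leq \log_2(\vert G\vert) + 2$, in fact the stronger bound $\SRank(G) \leq \log_2(\vert G \vert)$ holds. I would prove the standard fact that for any finite group $K$, $\Rank(K) \leq \log_2(\vert K \vert)$: if $\{g_1, \dots, g_k\}$ is a minimal generating set of $K$, then for each $1 \leq i \leq k$ the subgroup $\langle g_1, \dots, g_{i-1}\rangle$ is properly contained in $\langle g_1, \dots, g_i\rangle$ (otherwise $g_i$ would be redundant), so by Lagrange's theorem the order at least doubles at each step, giving $\vert K \vert \geq 2^k$. Applying this to every subgroup $K \leq G$ and using $\vert K \vert \leq \vert G \vert$ yields $\SRank(G) \leq \log_2(\vert G \vert)$, which is even tighter than claimed.

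I do not expect any substantial obstacle: the proof is a short assembly of already-stated facts together with the textbook chief-factor estimate $\Rank(K) \leq \log_2\vert K\vert$. The only point requiring a moment of care is making sure the ingredients of Lemma~\ref{le:basic} apply (which they do, since $H$ is normal in $N_G(H)$, so the quotient $N_G(H)/H$ is well defined).
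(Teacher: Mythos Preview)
Your proposal is correct and follows essentially the same route as the paper: apply Lemma~\ref{le:basic} (parts 1 and 3) to bound the wreath product rank by $\Rank(N_G(H)) + 2$, then invoke the definition of $\SRank(G)$, and finally use the estimate $\SRank(G) \leq \log_2(\vert G\vert)$. The only difference is that the paper cites \cite[Lemma~1.2.2]{LS03} for this last bound, whereas you supply the direct Lagrange-based argument yourself.
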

\begin{proof}
By Lemma \ref{le:basic}, $\Rank\left( (N_{G}(H)/H) \wr S_{\alpha} \right) \leq \Rank(N_{G}(H)) + 2$. Moreover, it is clear that $\Rank(N_{G}(H)) \leq \SRank(G)$ for every subgroup $H \leq G$. The second inequality follows by \cite[Lemma 1.2.2]{LS03}, as $\SRank(G) + 2 \leq \log_2(\vert G \vert)$. 
\end{proof}

As an alternative, instead of using the subgroup rank in Lemma \ref{le-aux2}, we may use the \emph{length} of $G$ (see \cite[Sec. 1.15]{C94}) which is defined as the length $\ell := \ell(G)$ of the longest chain of proper subgroups
\[ 1=G_0 < G_1 < \dots < G_\ell = G. \]
Observe that $\Rank(G) \leq \ell(G)$, as the set $\{ g_i \in G : g_i \in G_i - G_{i-1}, \ i=1,\dots, \ell \}$ (with $G_i$ as the above chain of proper subgroups) generates $G$. As $\ell(K) \leq \ell(G)$, for every $K \leq G$, it follows that $\SRank(G) \leq \ell(G)$. 

The lengths of the symmetric groups are known by \cite{CST89}: $\ell(S_n) = \lceil 3n/2 \rceil - b(n)-1$, where $b(n)$ is the numbers of ones in the base $2$ expansion of $n$. As, $\ell(G) = \ell(N) + \ell(G/N)$ for any normal subgroup $N$ of $G$, the length of a finite group is equal to the sum of the lengths of its compositions factors; hence, the question of calculating the length of all finite groups is reduced to calculating the length of all finite simple groups. Moreover, $\ell(G) \leq \log_2(\vert G \vert)$, by \cite[Lemma 2.2]{CST89}.

\begin{theorem}\label{finite-upper-bound}
Let $G$ be a finite group of size $n$, $r:= r(G)$, and $A$ a finite set of size $q \geq 2$. Let $r_i$ be the number of conjugacy classes of subgroups of $G$ of index $i$. Let $p_1, \dots, p_s$ be the prime divisors of $\vert G \vert$ and let $r_P = \sum_{i=1}^s r_i$. Then:
\[ 
\Rank( \Aut( A^G )) \leq \begin{cases}
 (r-r_P -1) \log_2(\vert G \vert) + 2r - r_2 -1 & \text{if } q=2, \\
(r - r_P - 1) \log_2(\vert G \vert) + 2r  & \text{if } q\geq 3.
\end{cases} \]
\end{theorem}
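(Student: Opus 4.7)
The plan is to mirror the proof strategy used for Theorem \ref{cor:bound} in the Dedekind case, but now relying on Lemmas \ref{le-aux1} and \ref{le-aux2} to handle the fact that quotients $N_G(H)/H$ need no longer be cyclic of prime order. The starting point is Theorem \ref{th:ICA}, which gives the decomposition
\[ \Aut(A^G) \cong \prod_{i=1}^{r} \bigl( (N_{G}(H_i)/H_i) \wr S_{\alpha_i} \bigr), \]
and part 2 of Lemma \ref{le:basic} (applied inductively), which says that the rank of a direct product is at most the sum of the ranks of the factors. So it suffices to bound $\Rank((N_G(H_i)/H_i)\wr S_{\alpha_i})$ for each $i$.

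Then I would list the conjugacy classes so that $H_r = G$, and partition the remaining $r-1$ classes according to whether $[G:H_i]$ is prime. The class $[G]$ contributes the factor $(N_G(G)/G)\wr S_{\alpha_r} \cong S_q$, where $\alpha_r = q$ by Lemma \ref{alpha}(1); this factor has rank $1$ when $q=2$ and rank $2$ when $q \geq 3$. For each of the $r_P = \sum_k r_{p_k}$ conjugacy classes of subgroups of prime index, Lemma \ref{le-aux1} bounds the corresponding factor's rank by $1$ when the index is $2$ and $q=2$, and by $2$ otherwise. For each of the remaining $r - r_P - 1$ classes, Lemma \ref{le-aux2} gives the uniform bound $\log_2(|G|) + 2$.

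Adding these contributions in the case $q \geq 3$ gives
\[ \Rank(\Aut(A^G)) \leq (r - r_P - 1)(\log_2(|G|) + 2) + 2 r_P + 2 = (r - r_P - 1)\log_2(|G|) + 2r, \]
which is the desired bound. For $q = 2$ one further splits the prime-index contribution into the $r_2$ index-$2$ classes (each bounded by $1$) and the remaining $r_P - r_2$ classes (each bounded by $2$), and uses $\Rank(S_2)=1$ for the factor coming from $[G]$; a short arithmetic simplification then yields the bound $(r - r_P - 1)\log_2(|G|) + 2r - r_2 - 1$.

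The argument is essentially a bookkeeping exercise once the three lemmas are in place; the only mildly delicate point will be making sure the case analysis for $q=2$ correctly accounts for both the savings from $\Rank(S_2)=1$ at the $[G]$ factor and the savings at the index-$2$ factors predicted by Lemma \ref{le-aux1}. No deeper ingredient is needed beyond the structure theorem \ref{th:ICA} and the elementary subgroup-rank inequality $\SRank(G) \leq \log_2(|G|)$ invoked in Lemma \ref{le-aux2}.
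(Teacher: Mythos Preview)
Your proposal is correct and follows essentially the same approach as the paper: decompose $\Aut(A^G)$ via Theorem~\ref{th:ICA}, bound each factor using Lemmas~\ref{le-aux1} and~\ref{le-aux2}, treat the $[G]$-factor as $S_q$, and sum. The case split and arithmetic you describe for $q=2$ match the paper's computation line by line.
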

\begin{proof}
Let $H_1, H_2, \dots, H_r$ be the list of different subgroups of $G$ with $H_r = G$. By Theorem \ref{th:ICA} and Lemmas \ref{le:basic}, \ref{le-aux1}, \ref{le-aux2},
\begin{align*}
\Rank( \Aut(A^G )) & \leq \sum_{i=1}^{r-1} \Rank\left( (N_{G}(H_i)/H_i) \wr S_{\alpha_i} \right) + \Rank(S_q) \\
& \leq \sum_{[G : H_i]=p_k} 2 +  \sum_{1 \neq [G : H_i] \neq p_k} (\log_2(\vert G \vert) + 2) + 2 \\
& = 2 r_P+ (r - r_P - 1) (\log_2(\vert G \vert) + 2) + 2  \\
& =  (r - r_P - 1) \log_2(\vert G \vert)  + 2r .
\end{align*}
When $q=2$, we may improve this bound as follows:
\begin{align*}
\Rank( \Aut(A^G)) & \leq   \sum_{[G : H_i]=2} 1  +  \sum_{[G : H_i]=p_k \neq 2} 2   +  \sum_{1<[G : H_i] \neq p_k} (\log_2(\vert G \vert) + 2) + 1 \\
& = r_2  + 2(r_P - r_2) + (r - r_P - 1) (\log_2(\vert G \vert)+2) + 1  \\
& = (r-r_P -1)\log_2(\vert G \vert) + 2r - r_2 - 1.
\end{align*}

\end{proof}

If $G$ is a subgroup of $S_n$ (i.e. if $G$ is a permutation group), we may find a good upper bound for $\Rank( \Aut(A^G))$ in terms of $n$ by using a theorem by McIver and Neumann.

\begin{proposition}
Suppose that $G \leq S_n$, for some $n >3$. Let $r := r(G)$. Then 
\[  \Rank( \Aut(A^G)) \leq \begin{cases}
 (r -1) \left\lfloor \frac{n}{2} \right\rfloor + 2r - r_2 -1 & \text{if } q=2, \\
(r  - 1) \left\lfloor \frac{n}{2} \right\rfloor  + 2r  & \text{if } q \geq 3,
\end{cases} \]
where $\left\lfloor \frac{n}{2} \right\rfloor$ is the floor function. 
\end{proposition}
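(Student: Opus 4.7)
The plan is to mimic the proof of Theorem \ref{finite-upper-bound}, replacing the uniform bound $\log_2(\vert G \vert)$ by the tighter bound $\lfloor n/2 \rfloor$ available for subgroups of $S_n$. The key external ingredient is the theorem of McIver and Neumann, which asserts that for every integer $n > 3$ and every subgroup $G \leq S_n$, one has $\Rank(G) \leq \lfloor n/2 \rfloor$. Since this bound is inherited by all subgroups of $G$, we obtain $\SRank(G) \leq \lfloor n/2 \rfloor$ under the hypotheses of the proposition.

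Let $[H_1], \dots, [H_r]$ be the conjugacy classes of subgroups of $G$ with $H_r = G$. By Theorem \ref{th:ICA} and part 2 of Lemma \ref{le:basic},
\[ \Rank(\Aut(A^G)) \leq \sum_{i=1}^{r-1} \Rank\bigl( (N_G(H_i)/H_i) \wr S_{\alpha_i} \bigr) + \Rank(S_q), \]
using $\alpha_r = \alpha_{[G]}(G;A) = q$ from Lemma \ref{alpha}(1) and the fact that $N_G(G)/G$ is trivial, so the $r$-th factor is just $S_q$. For each $i < r$, Lemma \ref{le-aux2} combined with the McIver--Neumann bound gives $\Rank((N_G(H_i)/H_i) \wr S_{\alpha_i}) \leq \SRank(G) + 2 \leq \lfloor n/2 \rfloor + 2$.

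For $q \geq 3$, summing and using $\Rank(S_q) = 2$ yields
\[ (r-1)(\lfloor n/2 \rfloor + 2) + 2 \;=\; (r-1)\lfloor n/2 \rfloor + 2r, \]
which is the claimed bound. For $q = 2$, I would refine the bound on the $r_2$ factors corresponding to index-$2$ subgroups: by Lemma \ref{alpha}(2), $\alpha_i = 1$ for these, and since index-$2$ subgroups are automatically normal, $(N_G(H_i)/H_i) \wr S_1 \cong \mathbb{Z}_2$ has rank $1$. The remaining $r - 1 - r_2$ proper subgroups still contribute at most $\lfloor n/2 \rfloor + 2$ each, and $\Rank(S_2) = 1$. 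Summing gives
\[ (r-1-r_2)(\lfloor n/2 \rfloor + 2) + r_2 + 1 \;=\; (r-1-r_2)\lfloor n/2 \rfloor + 2r - r_2 - 1, \]
which is at most $(r-1)\lfloor n/2 \rfloor + 2r - r_2 - 1$, as claimed.

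I do not expect any serious obstacle: the argument is a direct adaptation of Theorem \ref{finite-upper-bound}, with McIver--Neumann replacing the crude bound $\SRank(G) \leq \log_2(\vert G \vert)$. The only points that require some care are correctly accounting for the $q=2$ improvement via Lemma \ref{alpha}(2), and noting that the hypothesis $n > 3$ is exactly what McIver--Neumann needs.
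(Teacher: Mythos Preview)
Your proposal is correct and follows essentially the same approach as the paper: the paper's proof simply invokes McIver--Neumann to get $\Rank(K) \leq \lfloor n/2 \rfloor$ for every $K \leq S_n$ (hence $\SRank(G) \leq \lfloor n/2 \rfloor$) and then says the remainder is analogous to the proof of Theorem~\ref{finite-upper-bound}. Your write-up just unpacks that analogy explicitly; in fact your $q=2$ computation yields the slightly sharper $(r-1-r_2)\lfloor n/2\rfloor + 2r - r_2 - 1$ before relaxing to the stated bound, which is fine.
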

\begin{proof}
By \cite{MN87}, for every $n > 3$ and every $K \leq S_n$, $\Rank(K) \leq \lfloor \frac{n}{2} \rfloor$. The rest of the proof is analogous to the proof of Theorem \ref{finite-upper-bound}.
\end{proof}

\begin{example}
Consider the symmetric group $S_4$. In this case it is known that $r=r(S_4) = 11$ and $r_2 = 1$ (as $A_4$ is its only subgroup of index $2$). Therefore,
\[  \Rank( \Aut(A^{S_4} )) \leq \begin{cases}
 (11 -1) \frac{4}{2} + 2 \cdot 11  - 1 -1 = 40  & \text{if } q=2, \\
(11  - 1) \frac{4}{2} + 2 \cdot 11 = 42 & \text{if } q \geq 3.
\end{cases} \]
For sake of comparison, the group $\Aut(\{0,1\}^{S_4} )$ has order $2^{2^{24}}$.
\end{example}

Finally, we find a lower bound for the rank of $\Aut(A^G)$, when $G$ is an arbitrary finite group. 

\begin{proposition}\label{lower-bound}
Let $G$ be a finite group and $A$ a finite set of size $q \geq 2$. Then
\[ \Rank(\Aut(A^G) \geq \begin{cases} 
r(G) -  r_2(G)  & \text{if } q=2, \\
r(G)  & \text{otherwise}. 
\end{cases} . \]
\end{proposition}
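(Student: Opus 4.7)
The plan is to exhibit a surjective group homomorphism from $\Aut(A^G)$ onto an elementary abelian $2$-group $(\mathbb{Z}_2)^m$, where $m$ matches the claimed lower bound, and then invoke the standard fact that rank cannot increase under surjective homomorphisms together with $\Rank((\mathbb{Z}_2)^m) = m$ (the latter because $(\mathbb{Z}_2)^m$ is an $\mathbb{F}_2$-vector space of dimension $m$, so any generating set must span it).

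By Theorem \ref{th:ICA}, we have
\[ \Aut(A^G) \cong \prod_{i=1}^{r} \left( (N_{G}(H_i)/H_i) \wr S_{\alpha_i} \right), \]
where $[H_1], \dots, [H_r]$ enumerate the conjugacy classes of subgroups of $G$ and $r = r(G)$. For every index $i$ with $\alpha_i \geq 2$, I would compose the canonical projection $(N_G(H_i)/H_i) \wr S_{\alpha_i} \twoheadrightarrow S_{\alpha_i}$ onto the top group with the sign homomorphism $S_{\alpha_i} \twoheadrightarrow \mathbb{Z}_2$; any transposition in $S_{\alpha_i}$ certifies surjectivity. Taking the direct product of these maps (together with the trivial map on the remaining factors) yields a surjective homomorphism $\Aut(A^G) \twoheadrightarrow (\mathbb{Z}_2)^m$, where $m$ counts the indices $i$ with $\alpha_i \geq 2$.

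It then suffices to determine $m$ in each case. If $q \geq 3$, then part 3 of Lemma \ref{alpha} gives $\alpha_i \geq 3$ for every $i$, hence $m = r(G)$. If $q = 2$, then part 2 of Lemma \ref{alpha} characterises $\alpha_i = 1$ as happening exactly when $[G:H_i] = 2$, and there are precisely $r_2(G)$ such conjugacy classes (every subgroup of index $2$ being normal), so $m = r(G) - r_2(G)$. In either case, the inequality $\Rank(\Aut(A^G)) \geq m$ follows at once from the surjection constructed above.

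I do not anticipate a genuine obstacle: the argument reduces entirely to a projection onto elementary abelian $2$-quotients together with the orbit count encoded in Lemma \ref{alpha}. The only subtle point worth emphasising is that the sign homomorphism requires $\alpha_i \geq 2$, and it is precisely this restriction that forces the $-\, r_2(G)$ correction in the $q = 2$ case.
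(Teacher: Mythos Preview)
Your proposal is correct and essentially identical to the paper's proof. The paper phrases the same map as the quotient of each wreath factor $(N_G(H_i)/H_i)\wr S_{\alpha_i}$ by its normal subgroup $(N_G(H_i)/H_i)\wr A_{\alpha_i}$, whereas you describe it explicitly as ``project to the top group, then apply the sign''; these are the same homomorphism, with the same kernel, landing in the same $(\mathbb{Z}_2)^m$.
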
 
\begin{proof}
Let $[H_1], [H_2], \dots, [H_r]$ be the conjugacy classes of subgroups of $G$, with $r=r(G)$. As long as $\alpha_i > 1$, the factor $(N_{G}(H_i)/H_i) \wr S_{\alpha_i}$, in the decomposition of $\Aut(A^G)$, has a proper normal subgroup $ (N_{G}(H_i)/H_i) \wr A_{\alpha_i}$ (where $A_{\alpha_i}$ is the alternating group of degree $\alpha_i$). We know that $\alpha_i =1$ if and only if $[G:H]=2$ and $q=2$ (Lemma \ref{alpha}). Hence, for $q \geq 3$, we have
\[  \Rank(\Aut(A^G)) \geq \Rank\left( \frac{\prod_{i=1}^{r} \left( (N_{G}(H_i)/H_i) \wr S_{\alpha_i} \right)}{ \prod_{i=1}^{r} \left( (N_{G}(H_i)/H_i) \wr A_{\alpha_i} \right) } \right) 
= \Rank\left(\prod_{i=1}^{r} \mathbb{Z}_2\right) = r.  \]    

Assume now that $q=2$, and let $[H_1], \dots, [H_{r_2}]$ be the conjugacy classes of subgroups of index two, with $r_2 = r_2(G)$. Now, $\Rank(\Aut(A^G))$ is at least
\[ \Rank\left(\frac{\prod_{i=1}^{r} \left( (N_{G}(H_i)/H_i) \wr S_{\alpha_i} \right)}{ \prod_{i=1}^{r} \left( (N_{G}(H_i)/H_i) \wr A_{\alpha_i} \right) } \right) =  \Rank\left(\prod_{i=r_2+1}^r \mathbb{Z}_2 \right) = r - r_2,  \]
and the result follows. 
\end{proof}


\section{Infinite groups} \label{infinite}

Now we turn our attention to the case when $G$ is an infinite group. It was shown in \cite{BLR88} that $\Aut(A^{\mathbb{Z}})$ is not finitely generated by studying its action on periodic configurations. In this section, using elementary techniques, we prove that the monoid $\End(A^G)$ is not finitely generated when $G$ contains an infinite descending chain of normal subgroups of finite index. In particular, this implies that $\End(A^G)$ is not finitely generated when $G$ is infinite residually finite, or infinite locally graded. This illustrates an application of the study of ranks of groups of automorphisms of full shifts over finite groups.

\begin{remark}\label{remark}
Let $G$ be a group that is not finitely generated. Suppose that $\End(A^G)$ has a finite generating set $H=\{ \tau_1, \dots, \tau_k \}$. Let $S_i$ be a memory set for each $\tau_i$. Then $G \neq \langle \cup_{i=1}^k S_i \rangle$, so let $\tau \in \End(A^G)$ be such that its minimal memory set is not contained in $\langle \cup_{i=1}^k S_i \rangle$. As a memory set for the composition $\tau_i \circ \tau_j$ is $S_i S_j = \{ s_i s_j : s_i \in S_i, s_j \in S_j \}$, $\tau$ cannot be in the monoid generated by $H$, contradicting that $H$ is a generating set for $\End(A^G)$. This shows that $\End(A^G)$ is not finitely generated whenever $G$ is not finitely generated.  
\end{remark}

The next result, which holds for an arbitrary group $G$, will be our main tool.

\begin{lemma}\label{th:quotient}
Let $G$ be a group and $A$ a set. For every normal subgroup $N$ of $G$, 
\[ \Rank(\End(A^{G/N}))  \leq \Rank(\End(A^G))  . \] 
\end{lemma}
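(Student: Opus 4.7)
The plan is to exhibit a surjective monoid homomorphism $\Phi: \End(A^G) \to \End(A^{G/N})$. Once this is in hand, any generating set of $\End(A^G)$ of minimum size maps to a generating set of $\End(A^{G/N})$, giving the desired inequality immediately.

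First, I would define the canonical embedding $\iota: A^{G/N} \to A^G$ by $\iota(y)(g) = y(gN)$. A direct computation using normality of $N$ shows its image is precisely the set $\Fix(N) := \{x \in A^G : n \cdot x = x \text{ for all } n \in N\}$, and that $\iota$ intertwines the $G/N$-shift on the domain with the $G$-shift on $\Fix(N)$, in the sense that $\iota((gN)\cdot y) = g \cdot \iota(y)$. Next, for any $\tau \in \End(A^G)$, the subset $\Fix(N)$ is $\tau$-invariant, because $\tau$ commutes with shifts: if $n \cdot x = x$, then $n \cdot \tau(x) = \tau(n \cdot x) = \tau(x)$. Consequently, I can define $\Phi(\tau) := \iota^{-1} \circ \tau\vert_{\iota(A^{G/N})} \circ \iota$, which is continuous (since $\iota$ is a homeomorphism onto its image, by the prodiscrete topology) and shift-equivariant for the $G/N$ action (by the intertwining property above). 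So $\Phi(\tau) \in \End(A^{G/N})$, and $\Phi$ is clearly a monoid homomorphism.

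The main work is showing $\Phi$ is surjective. Given $\sigma \in \End(A^{G/N})$, by the Curtis--Hedlund theorem there is a finite memory set $T \subseteq G/N$ and a local rule $\mu : A^T \to A$ defining $\sigma$. I would pick a transversal $T' \subseteq G$ consisting of one coset representative for each element of $T$, so that the quotient map restricts to a bijection $T' \to T$; through this bijection, $\mu$ can be viewed as a function $\tilde{\mu}: A^{T'} \to A$. Define $\tau \in \End(A^G)$ by the local rule $\tau(x)(g) = \tilde{\mu}((g^{-1} \cdot x)\vert_{T'})$. A short computation shows that for $y \in A^{G/N}$ and $x = \iota(y)$, one has $(g^{-1} \cdot x)(t') = y(gt'N)$ for every $t' \in T'$, which by the definition of $\sigma$ equals $\sigma(y)(gN)$. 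Hence $\tau(\iota(y)) = \iota(\sigma(y))$ for all $y$, i.e. $\Phi(\tau) = \sigma$.

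The hard part is the surjectivity construction: one has to pick the transversal $T'$ carefully and check that the lifted local rule, read through $\iota$, reproduces $\sigma$. Once this is verified, the conclusion $\Rank(\End(A^{G/N})) \leq \Rank(\End(A^G))$ follows by applying $\Phi$ to any minimal generating set of $\End(A^G)$, using that homomorphic images of generating sets generate the image.
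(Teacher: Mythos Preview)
Your proposal is correct and follows the same approach as the paper: construct a surjective monoid homomorphism $\Phi:\End(A^G)\to\End(A^{G/N})$ and push forward a minimal generating set. The only difference is that the paper obtains $\Phi$ by citing \cite[Proposition~1.6.2]{CSC10}, whereas you spell out the construction (via $\iota$, the invariance of $\Fix(N)$, and the transversal lift of the local rule); your details are exactly the content of that proposition, so the arguments coincide. One minor remark: any choice of transversal $T'$ works, so there is no need to pick it ``carefully.''
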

\begin{proof}
By \cite[Proposition 1.6.2]{CSC10}, there is a monoid epimorphism $\Phi : \End(A^G) \to \End(A^{G/N})$. Hence, the image under $\Phi$ of a generating set for $\End(A^G)$ of minimal size is a generating set for $\End(A^{G/N})$ (not necessarily of minimal size).  
\end{proof}

\begin{theorem}\label{descending}
Let $G$ be a group such that there is an infinite descending chain 
\[ G > N_1 > N_2 > \dots \] 
such that, for all $i \geq 1$, $N_i$ is a normal subgroup of finite index in $G$. Then, $\End(A^G)$ is not finitely generated.
\end{theorem}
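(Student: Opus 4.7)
The plan is to argue by contradiction: assume $\Rank(\End(A^G)) = k < \infty$, then use the chain together with the three main tools of the paper (the quotient reduction of Lemma \ref{th:quotient}, the group-of-units bound of Lemma \ref{le:group of units}, and the lower bound of Proposition \ref{lower-bound}) to produce finite quotients whose automorphism groups have arbitrarily large rank.

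More precisely, for each $i \geq 1$ the quotient $G/N_i$ is finite because $N_i$ has finite index in $G$. Applying Lemma \ref{th:quotient} to the normal subgroup $N_i$ gives $\Rank(\End(A^{G/N_i})) \leq \Rank(\End(A^G)) = k$, and then Lemma \ref{le:group of units} (which is available since $G/N_i$ is finite) yields $\Rank(\Aut(A^{G/N_i})) \leq k$. Combining this with Proposition \ref{lower-bound}, we conclude that
\begin{equation*}
r(G/N_i) - r_2(G/N_i) \leq k \quad \text{if } q=2, \qquad r(G/N_i) \leq k \quad \text{if } q \geq 3,
\end{equation*}
uniformly in $i$. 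To reach a contradiction it therefore suffices to show that the left-hand sides tend to infinity as $i \to \infty$.

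For this, I would exploit the chain directly. Setting $N_0 := G$, the subgroups $N_0/N_i, N_1/N_i, \dots, N_{i-1}/N_i, N_i/N_i$ form a strictly descending chain of $i+1$ distinct \emph{normal} subgroups of $G/N_i$, so in particular they represent $i+1$ distinct conjugacy classes of subgroups, giving $r(G/N_i) \geq i+1$. Moreover, because they lie in a strictly descending chain the indices $[G/N_i : N_j/N_i] = [G:N_j]$ are pairwise distinct, so at most one of these $i+1$ subgroups can have index $2$ in $G/N_i$. Consequently $r(G/N_i) - r_2(G/N_i) \geq i$, and both lower bounds grow without bound in $i$.

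Combining the previous two paragraphs, we obtain $i \leq k$ for every $i \geq 1$, the desired contradiction. The technical heart of the argument is really the elementary counting in the last paragraph: once one realizes that the chain yields many normal (hence conjugacy-class-distinguished) subgroups of $G/N_i$ with pairwise distinct indices, everything else is an assembly of the machinery already developed in Sections \ref{basic} and \ref{finite}. The only subtlety worth underlining is that Proposition \ref{lower-bound} is strong enough to survive the ``subtraction of $r_2$'' that appears in the binary-alphabet case; this is precisely what forces us to argue with \emph{distinct} indices rather than with the raw count of normal subgroups.
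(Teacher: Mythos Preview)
Your proof is correct and follows essentially the same route as the paper: pass to the finite quotients $G/N_i$ via Lemma~\ref{th:quotient}, drop to $\Aut$ via Lemma~\ref{le:group of units}, apply the lower bound of Proposition~\ref{lower-bound}, and then count the $i+1$ normal subgroups coming from the chain. Your justification that at most one chain subgroup has index $2$ (because the indices $[G:N_j]$ are strictly increasing, hence pairwise distinct) is in fact slightly more explicit than the paper's, making the inequality $r(G/N_i)-r_2(G/N_i)\ge i$ completely transparent.
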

\begin{proof}
For any finite group $K$, let $n(K)$ be  the number of normal subgroups of $K$. Not that $n(K) \leq r(K)$, as normal subgroups of $K$ are in bijection with conjugacy classes of subgroups of size $1$. 

For each $i \geq 1$, $N_i$ is a normal subgroup of finite index in $G$, so $G/N_i$ is a finite group. By the Correspondence Theorem (see \cite[Theorem 4.9]{R12}), $n(G/N_i) \geq i+1$, as there are at least $i+1$ intermediate normal subgroups between $G$ and $N_i$ (namely, $G, N_1, \dots, N_i$). If 

By Lemma \ref{th:quotient}, Lemma \ref{le:group of units}, and Proposition \ref{lower-bound}, for all $i \geq 1$, 
\begin{align*}
\Rank(\End(A^G)) & \geq \Rank(\End(A^{G/N_i}))  \\
& \geq \Rank(\Aut(A^{G/N_i})) \\
& \geq  r(G/N_i) - r_2(G/N_i) \\
& \geq n(G/N_i) - r_2(G/N_i)  \\
& \geq (i+1) - 1 = i
\end{align*}
as there is at most one subgroup of index two in the infinite descending chain of normal subgroups of finite index (see \cite[Theorem 3.1.3]{R12}. This implies that $\End(A^G)$ is not finitely generated. 
\end{proof}

\begin{example}
For the infinite cyclic group $\mathbb{Z}$ we may easily find a descending chain of normal subgroups of finite index; for example,
\[ \mathbb{Z} > 2\mathbb{Z} > 4 \mathbb{Z} > \dots > 2^k \mathbb{Z} > \dots \]
where $2^k \mathbb{Z} = \{ 2^k s : s \in \mathbb{Z} \}$. By Theorem \ref{descending}, $\End(A^{\mathbb{Z}})$ is not finitely generated.
\end{example}

\begin{definition}
A group $G$ is \emph{residually finite} if for every non-identity $g \in G$ there is a finite group $K$ and a homomorphism $\phi :G \to K$ such that $\phi(g)$ is not the identity in $K$. 
\end{definition}

We shall use the following characterization of residually finite groups.

\begin{proposition}
A group $G$ is residually finite if and only if for every non-identity $g \in G$ there is a normal subgroup $N$ of finite index in $G$ such that $g \not \in N$ 
\end{proposition}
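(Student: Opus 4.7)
The plan is to prove the two implications separately, using the standard correspondence between normal subgroups of finite index and kernels of homomorphisms to finite groups. Neither direction should pose any real obstacle; the proof is essentially a translation between the two definitions.

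For the forward direction, I would start with a non-identity element $g \in G$ and invoke the definition of residual finiteness to obtain a finite group $K$ and a homomorphism $\phi : G \to K$ with $\phi(g) \neq e_K$. Setting $N := \ker(\phi)$ gives a normal subgroup of $G$, and by the First Isomorphism Theorem $G/N$ embeds into $K$, so $[G:N] \leq |K| < \infty$. The condition $\phi(g) \neq e_K$ translates directly into $g \notin N$, which is exactly what is required.

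For the converse, I would take a non-identity $g \in G$ and pick a normal subgroup $N$ of finite index in $G$ with $g \notin N$ (which exists by hypothesis). Set $K := G/N$, which is a finite group because $N$ has finite index. Let $\phi : G \to K$ be the canonical quotient homomorphism; then $\phi(g) = gN \neq N$ is the non-identity element of $K$, completing the proof.

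The main (and only) thing to be careful about is not conflating the two definitions — it is worth stating explicitly that the correspondence hinges on the fact that finite-index normal subgroups are precisely the kernels of homomorphisms onto finite groups. Once that is laid out, both directions are one-line arguments.
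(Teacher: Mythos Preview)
Your proposal is correct and follows essentially the same argument as the paper: in both directions you (and the paper) pass between the homomorphism $\phi$ and the normal subgroup $N=\ker(\phi)$ via the First Isomorphism Theorem, using that $G/N$ embeds in $K$ to get finite index, and taking the quotient map for the converse. There is no meaningful difference in approach.
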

\begin{proof}
Suppose that $G$ is residually finite and consider a non-identity $g \in G$. Then there is a finite group $K$ and a homomorphism $\phi :G \to K$ such that $\phi(g)$ is not the identity in $K$. Then, the subgroup $N:=\ker(\phi)$ is a normal subgroup of $G$ of finite index (because $G/N$ is isomorphic to a subgroup of $K$, with $K$ a finite group). As $\phi(g)$ is not the identity in $K$, we have $g \not \in N$.  

For the converse, we take the natural homomorphism $\phi : G \to G/N$. The group $G/N$ is finite, as $N$ has finite index, and $\phi(g)$ is not the identity in $G/N$ because $g \not \in G$. The result follows.
\end{proof}

Yet another characterization, which we shall not use explicitly here, is that $G$ is residually finite if and only if it is isomorphic to a subgroup of a direct product of a family of finite groups (see \cite[Chapter 2]{CSC10}).   

Important classes of groups that are residually finite are finite groups, finitely generated abelian groups, profinite groups, free groups, and finitely generated linear groups. Moreover, subgroups, direct products, and direct sums of residually finite groups are residually finite. 

\begin{corollary}
Let $G$ be an infinite residually finite group. Then $\End(A^G)$ is not finitely generated.
\end{corollary}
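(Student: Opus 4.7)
The plan is to reduce this corollary to Theorem \ref{descending}: it suffices to produce, inside any infinite residually finite group $G$, an infinite strictly descending chain $G > N_1 > N_2 > \cdots$ of normal subgroups of finite index. Once such a chain is exhibited, Theorem \ref{descending} applies directly and yields that $\End(A^G)$ is not finitely generated.

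To build the chain I would proceed recursively, using the characterization of residual finiteness already established (for every non-identity $g \in G$ there is a normal subgroup of finite index in $G$ avoiding $g$) together with the closure of the class of finite-index normal subgroups under finite intersections. For the base step, pick any non-identity $g_1 \in G$ and a normal subgroup $N_1$ of finite index with $g_1 \notin N_1$; then $N_1 \subsetneq G$. For the inductive step, assume $N_i$ has been constructed as a normal subgroup of $G$ of finite index. Since $[G:N_i]$ is finite and $G$ is infinite, $N_i$ itself is infinite, so it contains a non-identity element $g_{i+1}$. Residual finiteness gives a normal subgroup $M$ of $G$ of finite index with $g_{i+1} \notin M$. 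Set $N_{i+1} := N_i \cap M$. This $N_{i+1}$ is normal in $G$ (intersection of two normal subgroups), has finite index in $G$ (intersection of two finite-index subgroups), and satisfies $g_{i+1} \in N_i \setminus N_{i+1}$, so $N_{i+1} \subsetneq N_i$.

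The only point requiring real attention is the strict containment $N_{i+1} \subsetneq N_i$ at each step; everything else is routine. That strict containment rests on two ingredients: the hypothesis that $G$ is infinite (so each finite-index subgroup remains infinite and supplies a fresh non-identity element to separate) and residual finiteness (to locate a normal finite-index subgroup excluding that element). Combining the recursion produces the required chain, and invoking Theorem \ref{descending} concludes that $\End(A^G)$ is not finitely generated.
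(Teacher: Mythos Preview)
Your proposal is correct and follows essentially the same argument as the paper: both construct the descending chain recursively by choosing a non-identity element $g$ in the current finite-index normal subgroup $N_i$, using residual finiteness to find a finite-index normal subgroup $M$ avoiding $g$, and setting $N_{i+1}=N_i\cap M$. The justification you give for the strict inclusion and for the normality and finite-index properties of the intersection matches the paper's reasoning exactly.
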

\begin{proof}
We shall construct an infinite descending chain of normal subgroups of finite index in $G$:
\[ G = N_0 > N_1 > N_2 > \dots  \]
Suppose that the group $N_i$ has been constructed. Clearly, $N_i$ is non-trivial as it is a subgroup of finite index in an infinite group. Take $g \in N_i < G$ with $g \neq e$. Then, there exists a normal subgroup $N_i^\prime$ of finite index in $G$ such that $g \not \in N_i^\prime$. Define $N_{i+1} := N_i \cap N_i^\prime$. Note that $N_{i+1}$ is properly contained in $N_i$, as, otherwise, $N_i = N_i \cap N_i^\prime$ implies that $N_i \subseteq N_i^\prime$, contradicting the existence of $g$. Since intersections of normal subgroups are normal subgroups, and intersections of subgroups of finite index have finite index (see \cite[Lemma 2.5.2]{CSC10}), then $N_{i+1}$ has the required properties.  

The result follows by Theorem \ref{descending}. 
\end{proof}

\begin{corollary}
Let $G$ be an infinite abelian group. Then $\End(A^G)$ is not finitely generated. 
\end{corollary}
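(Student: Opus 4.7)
The plan is to split into two cases according to whether $G$ is finitely generated, and to reduce each case to a result already established in the paper.

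First, suppose $G$ is not finitely generated. Then Remark \ref{remark} applies directly: any finite collection of endomorphisms $\tau_1,\dots,\tau_k$ of $A^G$ has memory sets $S_1,\dots,S_k$ whose union generates only a proper subgroup of $G$, so any endomorphism with a larger minimal memory set escapes the submonoid they generate. Hence $\End(A^G)$ is not finitely generated in this case, regardless of abelianness.

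Second, suppose $G$ is finitely generated. Since $G$ is infinite and abelian, by the structure theorem for finitely generated abelian groups we have $G \cong \mathbb{Z}^n \oplus F$ for some $n \geq 1$ and some finite abelian group $F$. In particular, $G$ surjects onto $\mathbb{Z}$. Alternatively (and this is the route I would take because it is textbook), one invokes the fact — noted explicitly in the excerpt just before the corollary on residually finite groups — that every finitely generated abelian group is residually finite. Then $G$ is an infinite residually finite group, and the preceding corollary immediately yields that $\End(A^G)$ is not finitely generated.

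There is essentially no obstacle: the work is done by the earlier results. If one preferred to avoid invoking residual finiteness as a black box, the only mildly technical step would be to write down an explicit infinite descending chain of normal subgroups of finite index in a finitely generated infinite abelian group, for which $\mathbb{Z}^n \oplus F > 2\mathbb{Z} \oplus \mathbb{Z}^{n-1} \oplus F > 4\mathbb{Z} \oplus \mathbb{Z}^{n-1} \oplus F > \cdots$ does the job and lets us conclude via Theorem \ref{descending} directly.
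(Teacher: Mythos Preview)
Your proof is correct and follows essentially the same approach as the paper: split into the non-finitely-generated case (handled by Remark~\ref{remark}) and the finitely generated case (handled by the residually finite corollary, using that finitely generated abelian groups are residually finite). The alternative explicit chain you mention is a fine back-up but not needed, since the paper already records that finitely generated abelian groups are residually finite.
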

\begin{proof}
If $G$ is finitely generated, the result follows by the previous corollary, while if $G$ is not finitely generated the result follows by Remark \ref{remark}.
\end{proof}

A group $G$ is said to be \emph{locally graded} if every finitely generated non-trivial subgroup of $G$ contains a proper subgroup of finite index. This class of groups includes all generalized solvable groups and all residually finite groups \cite{KR94}.

\begin{corollary}
If $G$ is an infinite locally graded group, then $\End(A^G)$ is not finitely generated. 
\end{corollary}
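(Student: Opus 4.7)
The plan is to reduce, once again, to Theorem~\ref{descending} by producing an infinite strictly descending chain
\[ G = N_0 > N_1 > N_2 > \cdots \]
of normal subgroups of finite index in $G$. If $G$ is not finitely generated, Remark~\ref{remark} already yields that $\End(A^G)$ is not finitely generated, so I may assume from now on that $G$ is an infinite, finitely generated, locally graded group.

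I construct the chain inductively, starting with $N_0 := G$. Suppose $N_i$ has been built as a normal subgroup of finite index in $G$. Since $G$ is finitely generated and $N_i$ has finite index in $G$, a Schreier-type argument shows $N_i$ is itself finitely generated; and since $G$ is infinite while $[G:N_i]$ is finite, $N_i$ is infinite, hence non-trivial. Applying the locally graded hypothesis to the finitely generated non-trivial subgroup $N_i$, I obtain a proper subgroup $M_i < N_i$ of finite index in $N_i$. Because $[G:M_i] = [G:N_i]\,[N_i:M_i]$ is finite, $M_i$ also has finite index in $G$. Now set
\[ N_{i+1} := \bigcap_{g \in G} g^{-1} M_i g, \]
the normal core of $M_i$ in $G$. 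This is normal in $G$ by construction, and since $M_i$ admits only finitely many $G$-conjugates, each of finite index in $G$, the intersection $N_{i+1}$ still has finite index in $G$. Moreover $N_{i+1} \subseteq M_i \subsetneq N_i$, so the containment is proper.

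The main subtlety is that the locally graded condition supplies only a finite-index subgroup of $N_i$, with no guarantee of normality in $G$; replacing $M_i$ by its $G$-core restores normality without losing finite index, which is exactly what Theorem~\ref{descending} requires. With the chain in hand, that theorem delivers the conclusion that $\End(A^G)$ is not finitely generated.
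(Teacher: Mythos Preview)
Your proof is correct and follows the same overall strategy as the paper: dispose of the non--finitely-generated case via Remark~\ref{remark}, and in the finitely generated case produce an infinite descending chain of normal subgroups of finite index so that Theorem~\ref{descending} applies. The difference is that the paper simply cites \cite[3.1]{OP88} for the existence of such a chain, whereas you give an explicit, elementary construction: use Schreier's lemma to see each $N_i$ is finitely generated, apply the locally graded hypothesis to $N_i$ to obtain a proper finite-index subgroup $M_i$, and then pass to the $G$-core to recover normality in $G$ without losing finite index. Your argument is thus more self-contained and makes transparent exactly how the locally graded condition is used at each stage; the paper's version is shorter but relies on an external reference for precisely this step.
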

\begin{proof}
If $G$ is not finitely generated, the result follows by Remark \ref{remark}. If $G$ is finitely generated, then \cite[3.1]{OP88} shows that there is an infinite descending chain of normal subgroups of finite index. The result follows by Theorem \ref{descending}. 
\end{proof}

\begin{corollary}
If $G$ is an extension of a group $Q$ satisfying the hypothesis of Theorem \ref{descending}, then $\End(A^G)$ is not finitely generated.
\end{corollary}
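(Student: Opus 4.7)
The plan is to lift the descending chain of normal subgroups of finite index from $Q$ up to $G$ via the defining quotient map, and then invoke Theorem \ref{descending} directly. Interpreting ``$G$ is an extension of $Q$'' in the standard sense, we have a surjective homomorphism $\pi : G \to Q$ with kernel some normal subgroup $K \trianglelefteq G$ satisfying $G/K \cong Q$. My strategy is therefore to show that the preimages under $\pi$ of a witnessing chain in $Q$ form a witnessing chain in $G$, so the hypothesis of Theorem \ref{descending} is inherited along extensions.

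First I would select a chain $Q > M_1 > M_2 > \dots$ of normal subgroups of finite index in $Q$, which exists by the assumption on $Q$, and define $N_i := \pi^{-1}(M_i) \leq G$ for each $i \geq 1$. Then I would verify the three required properties of the lifted sequence $G > N_1 > N_2 > \dots$:
\begin{enumerate}
\item each $N_i$ is normal in $G$, since the preimage of a normal subgroup under any homomorphism is normal;
\item each $N_i$ has finite index in $G$, since $\pi$ induces a bijection between the coset spaces $G/N_i$ and $Q/M_i$, yielding $[G:N_i] = [Q:M_i] < \infty$;
\item the chain is strictly descending, because $\pi$ is surjective, so $\pi(N_i) = M_i$; the strict containment $M_{i+1} < M_i$ in $Q$ then forces $N_{i+1} \neq N_i$, and combined with the monotonicity $N_{i+1} \subseteq N_i$ of preimages this gives $N_{i+1} < N_i$. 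The same reasoning applied to the pair $(G, N_1)$ yields $N_1 < G$.
\end{enumerate}

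With the lifted chain in hand, Theorem \ref{descending} applied to $G$ concludes that $\End(A^G)$ is not finitely generated. There is no significant obstacle here; the argument is a straightforward transfer via the quotient map, and the only point deserving a sentence of care is the strict descent of the lifted chain, which genuinely requires the surjectivity of $\pi$ in order to conclude $\pi(N_i) = M_i$ (and not just $\pi(N_i) \subseteq M_i$).
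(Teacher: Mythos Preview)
Your argument is correct, but it takes a different route from the paper. The paper does not lift the chain to $G$ at all: instead it observes that $G/N \cong Q$, invokes Lemma~\ref{th:quotient} to get $\Rank(\End(A^{Q})) \leq \Rank(\End(A^{G}))$, and then applies Theorem~\ref{descending} to $Q$ (not to $G$) to conclude that the left-hand side is infinite. Your approach bypasses Lemma~\ref{th:quotient} entirely by pulling the chain back through $\pi$ and applying Theorem~\ref{descending} directly to $G$; in effect you are reproving, via the Correspondence Theorem, the special case of Lemma~\ref{th:quotient} that the paper needs. The paper's route is shorter given the tools already in place, while yours has the small bonus of showing that $G$ itself satisfies the hypothesis of Theorem~\ref{descending}, not merely its conclusion.
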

\begin{proof}
If $G$ is an extension of $Q$ there exists a normal subgroup $N$ of $G$ such that $G/N \cong Q$. By Lemma \ref{th:quotient}, $\Rank(\End(A^{Q}))  \leq \Rank(\End(A^G))$, so the result follows by Theorem \ref{descending}.
\end{proof}

The techniques of this section seem ineffective to investigate if $\End(A^G)$ is finitely generated when $G$ is a group with few normal subgroups, such as the infinite symmetric group, or infinite simple groups.

A natural and interesting variant of Theorem \ref{descending} would be to prove that the group $\Aut(A^G)$ is not finitely generated when $G$ has an infinite descending chain of normal subgroups of finite index. The main problem here is to find an analogous result of Lemma \ref{th:quotient} for $\Aut(A^G)$. This is a hard question in general, as the monoid epimorphism $\Phi : \End(A^G) \to \End(A^{G/N})$ does not necessarily restrict to an epimorphism of the groups of units, i.e. the homomorphism $\Phi \vert_{\Aut(A^G)} : \Aut(A^G) \to \Aut(A^{G/N})$ may not be surjective.


\end{document}